\crefname{hypothesis}{Hypothesis}{Hypotheses}
\title{Convergence Analysis of the Rank-Restricted Soft SVD Algorithm \thanks{Submitted to the editors DATE.
\funding{This work is partially supported by NSF grants DMS-1854204, DMS-2006808, DMS-1818772, DMS-1913004, the Air Force Office of Scientific Research (AFOSR) under Award NO: FA9550-19-1-0036, and Department of Navy, Naval PostGraduate School under Award NO: N00244-20-1-0005.}}}
\author{Mahendra Panagoda\thanks{Department of Mathematical Sciences, George Mason University, Fairfax, VA.}
\and Tyrus Berry\footnotemark[2] \thanks{\email{tberry@gmu.edu}}
\and Harbir Antil\footnotemark[2]}
\begin{document}

\maketitle

% REQUIRED
\begin{abstract}
The soft SVD is a robust matrix decomposition algorithm and a key component of matrix completion methods.  However, computing the soft SVD for large sparse matrices is often impractical using conventional numerical methods for the SVD due to large memory requirements.  The Rank-Restricted Soft SVD (RRSS) algorithm introduced by Hastie et al. addressed this issue by sequentially computing low-rank SVDs that easily fit in memory.  We analyze the convergence of the standard RRSS algorithm and we give examples where the standard algorithm does not converge.  We show that convergence requires a modification of the standard algorithm, and is related to non-uniqueness of the SVD.   Our modification specifies a consistent choice of sign for the left singular vectors of the low-rank SVDs in the iteration.  Under these conditions, we prove linear convergence of the singular vectors using a technique motivated by alternating subspace iteration.  We then derive a fixed point iteration for the evolution of the singular values and show linear convergence to the soft thresholded singular values of the original matrix.  This last step requires a perturbation result for fixed point iterations which may be of independent interest. 
\end{abstract}

% REQUIRED
\begin{keywords}
  low rank approximation, soft SVD, matrix completion, regularization
\end{keywords}

% REQUIRED
\begin{AMS}
  65F55, 65F22, 15A83
\end{AMS}

\section{The Rank-Restricted Soft SVD}

%\subsection{Problem Formulation}

In this paper we consider the following rank-restricted matrix decomposition problem,
\begin{equation}\label{rrss} \underset { A \in \mathbb{R}^{n \times r},  B \in \mathbb{R}^{m \times r} }{ \min } \quad  \frac{1}{2}\|  X - AB^\top \|^2_F  +  \frac{\lambda}{2}(\|A\|^2_F+ \|B\|^2_F) \end{equation}
where $X \in \mathbb{R}^{n\times m}$ is considered the input to the problem, $r \leq p \equiv \min\{m,n\}$ is the rank restriction, and $\lambda$ is a regularization parameter.  The product $AB^\top$ is an approximation of $X$ in the Frobenius norm with rank at most $r$.  In \cite{mazumder2010spectral,Hastie} it was shown that when $A,B$ solve \eqref{rrss} the product $AB^\top$ solves,
\begin{equation}\label{ssvd} \underset {Z \, : \, \textup{rank}(Z)\leq r}{ \min } \quad  \frac{1}{2}\|  X - Z \|^2_F  +  \lambda\|Z\|_* \end{equation} 
where the nuclear norm $\|Z\|_*$ is the sum of the singular values of $Z$.  The relationship between these solutions suggests that $AB^\top$ is a robust low-rank approximation to $X$.  This approximation is a key component of many matrix completion algorithms \cite{Hastie,mazumder2010spectral,cai2010singular,candes2010matrix}.  In this paper we will analyze a numerical method for solving \eqref{rrss} proposed by Hastie et al. in \cite{Hastie}.  We will show that a modification is required to obtain convergence, and we give the first complete proof of convergence.

The problem \eqref{rrss} is called the Rank-Restricted Soft SVD (RRSS) because the solution involves soft-thresholding of the singular value decomposition (SVD). Given the reduced SVD, $X=USV^\top$ ($U\in \mathbb{R}^{n\times p}$, $S\in \mathbb{R}^{p\times p}$, $V\in \mathbb{R}^{m\times p}$ where $p\equiv \min\{m,n\}$), the solution to \eqref{rrss} is found by first soft-thesholding the singular values,
\[ D \equiv \sqrt{(S-\lambda I)^+ }= \sqrt{\max\{0,S-\lambda I \}} \]
 and then defining $A_{\rm opt}=UDI_{p\times r}$ and $B_{\rm opt} = VD^\top I_{p\times r}$ \cite{Hastie}.  When $X$ is full matrix, a standard or partial SVD can be used to obtain this solution.  However, in many applications such as matrix completion, $X$ is a sparse matrix that is too large to be stored as a full matrix.  Motivated by these applications, in \cite{Hastie} Hastie et al. introduced a fast and memory efficient alternating ridge regression algorithm shown as Algorithm \ref{alg1} below.

\begin{algorithm}[H]\label{alg1} 
\centering
\caption{Alternating Directions Optimization for \eqref{rrss}} 
\begin{algorithmic} 
	\State {\bf Inputs:} An $n\times m$ matrix $X$, rank restriction $r$, and regularization parameter $\lambda$
	\State {\bf Outputs:} An $n\times r$ matrix $A$ and an $m\times r$ matrix $B$
	\State
    \State Initialize $A$ as a random $n\times r$ matrix  and $A_{\rm p} = B_{\rm p} = 0$
    \While{$\frac{||A-A_{\rm p}||_{\max}}{||A||_{\max}} + \frac{||B-B_{\rm p}||_{\max}}{||B||_{\max}} > {\rm tol}$}
            \State $A_{\rm p} = A$, $B_{\rm p} = B$
            \State Update $B$ leaving $A$ fixed:
            \Indent
            \State $B \leftarrow X^\top A(A^\top A + \lambda I_{r\times r})^{-1}$
            \EndIndent
            \State Update $A$ leaving $B$ fixed:
            \Indent
            \State $A \leftarrow X B(B^\top B + \lambda I_{r\times r})^{-1}$
            \EndIndent
    \EndWhile
\end{algorithmic}
\end{algorithm}

We first consider a simplistic approach to solving \eqref{rrss} shown in Algorithm \ref{alg1}.  This method is motivated by the alternating directions method of optimization \cite{parikh2014proximal,boyd2011distributed}. The objective function in \eqref{rrss} is not convex as a function of both $A$ and $B$ together, however, when either $A$ or $B$ is fixed the objective function is convex and quadratic in the other.  For example when $A$ is fixed, we can rewrite the objective function in \eqref{rrss} as,
\[ \sum_{i=1}^N \frac{1}{2}||X_i-AB_i ||_2^2 + \frac{\lambda}{2}||B_i||_2^2 + c_1 = \sum_{i=1}^N \frac{1}{2} B_i^\top (A^\top A+\lambda I_{r\times r}) B_i -  B_i^\top A^\top X_i + c_2 \]
where $X_i$ is the $i$-th column of $X$ and $B_i$ is the $i$-th column of $B^\top$ ($c_1,c_2$ are constants with respect to $B$).  The optimization problems for each column of $B^\top$ are independent and the optimal solution is $B_i = (A^\top A+\lambda I_{r\times r})^{-1}A^\top X_i$.  Combining these columns we find the optimal solution for $B$, when $A$ is fixed, has the closed form solution, $X^\top A(A^\top A + \lambda I_{r\times r})^{-1}$.  If we then hold $B$ fixed, we have a similar optimization problem for $A$ with optimal solution $XB(B^\top B +\lambda I_{r\times r})^{-1}$.

\begin{figure}[h]
\includegraphics[width=0.45\linewidth]{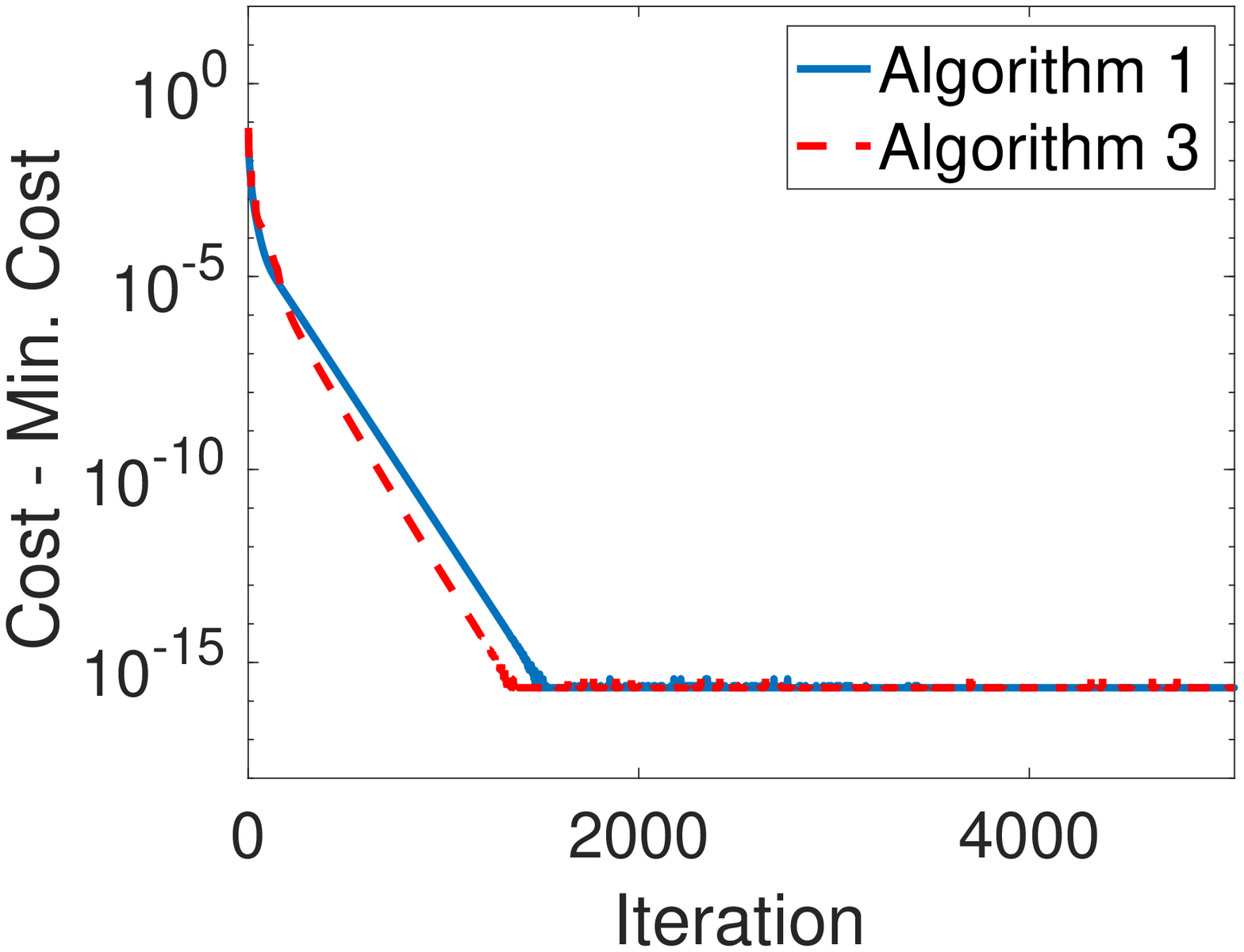}\hspace{10pt}\includegraphics[width=0.45\linewidth]{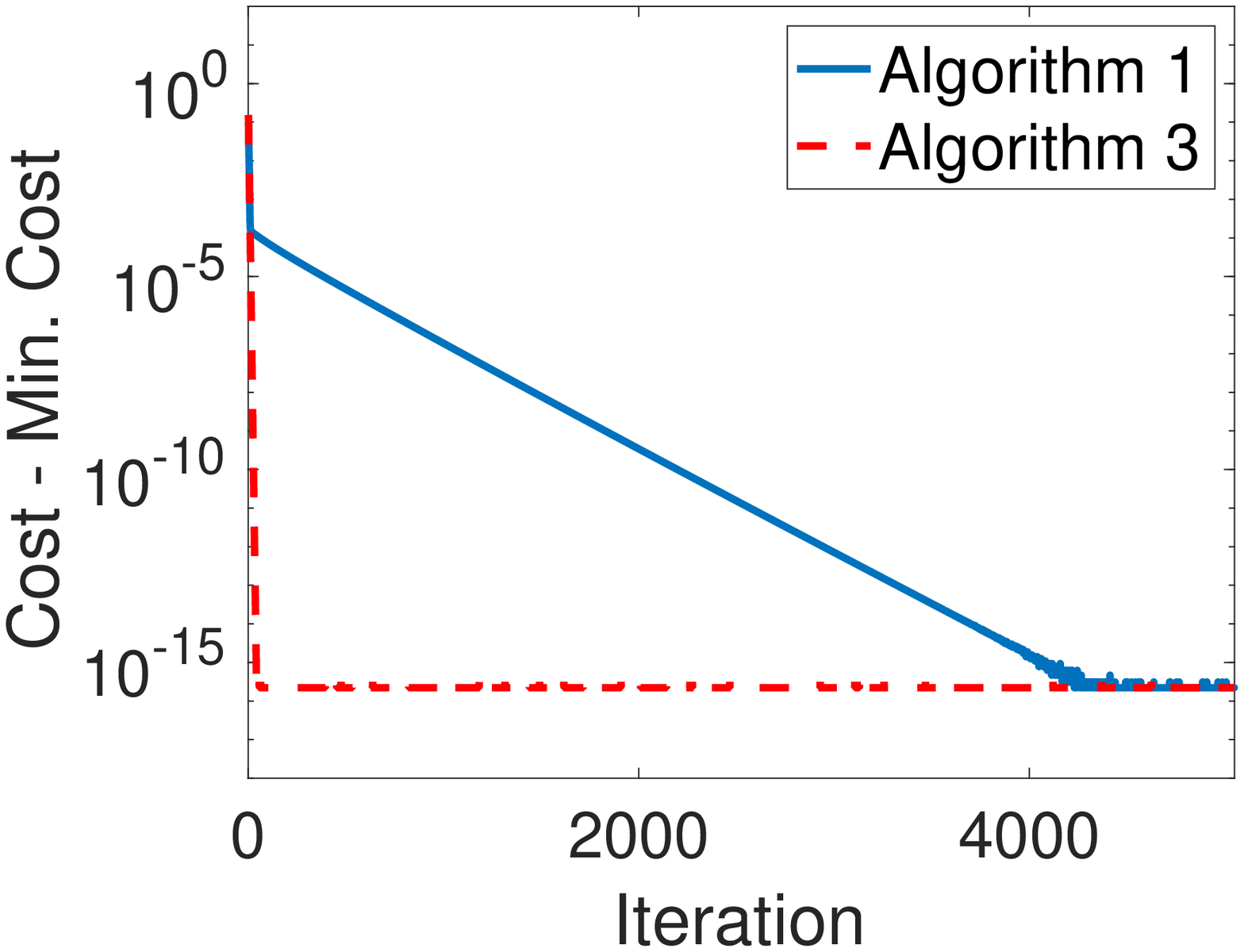}
\caption{\label{fig1} For a full-rank $X$ matrix (left) Algorithms \ref{alg1} and \ref{alg3} have similar performance, however when $X$ is approximately low-rank (right) Algorithm \ref{alg3} is significantly faster.  In both examples $X$ is $500\times 500$ and we set $r=10$ and $\lambda=0.5$ and the minimum cost is computed using $A_{\rm opt}, B_{\rm opt}$.  In the left panel the entries of $X$ are independent standard Gaussian random variables.  In the right panel $\tilde A,\tilde B$ are $500\times 10$ matrices with independent standard Gaussian entries and $X = \tilde A\tilde B^\top + 10\tilde X$ where $\tilde X$ is $500\times 500$ with standard Gaussian entries.}
\end{figure}

While the alternating directions method does converge, as shown in Figure~\ref{fig1}(right panel) it has slow convergence even when $X$ is approximately low-rank .  Hastie et al. noticed that the Algorithm \ref{alg1} looks like a power iteration method, since at each step we multiply the current $A$ or $B$ by either $X$ or $X^\top$ respectively \cite{Hastie}.  Thus, motivated by the idea of orthogonal power iteration, Hastie et al. introduced the idea of using an SVD between each alternation in order to orthogonalize the columns of $A$ and $B$.  Notice that $A$ and $B$ are $m\times r$ and $n\times r$ respectively, so for $r\ll \min\{m,n\}$ these SVDs will often be computable even when the full SVD of $X$ is impractical.  These insights led Hastie et al. to introduce Algorithm \ref{alg2} in \cite{Hastie}.  The authors in \cite{Hastie} suggested that the approach used to show convergence of orthogonal power iteration (see for example \cite{Golub} Theorem 8.2.2, also \cite{HAntil_DChen_SField_2018a}) could be applied to Algorithm \ref{alg2}.  In Section \ref{vecConvergence} we will confirm that the method of \cite{Golub} can indeed be adapted to show convergence of the singular vectors.  However, a more detailed analysis is required to show convergence of the singular values, as we will show in Section \ref{valConvergence}.  Moreover, Algorithm \ref{alg2} can fail to converge or converge to a non-optimal stationary point due to a subtle issue involving the non-uniqueness of the SVD.

%\begin{figure}
\hspace{-13pt}\begin{minipage}{0.48\linewidth}
\begin{algorithm}[H] 
\centering
\caption{Rank-Restricted Soft SVD \cite{Hastie} } \label{alg2}
\begin{algorithmic} 
	\State {\bf Inputs:} An $n\times m$ matrix $X$, \\ \quad\quad\quad\quad Rank restriction $r$, and \\ \quad\quad\quad\quad Regularization parameter $\lambda$
	\State {\bf Outputs:} An $n\times r$ matrix $A$ and \\ \quad\quad\quad\quad\quad An $m\times r$ matrix $B$
	\State
    \State Initialize $D = I_{r \times r}$
    \State Initialize $U \in \mathbb{R}^{n\times r}$ a random \\ \quad orthonormal matrix
    \State Initialize $A = U D$  and $A_{\rm p} = B_{\rm p} = 0$
    \While{$\frac{||A-A_{\rm p}||_{\max}}{||A||_{\max}} + \frac{||B-B_{\rm p}||_{\max}}{||B||_{\max}} > {\rm tol}$}
            \State Set $A_{\rm p} = A$, $B_{\rm p} = B$
            \State Update $B$ leaving $A$ fixed:
            \Indent
            \State $B \leftarrow X^\top A(D^2 + \lambda I_{r\times r})^{-1}$
            \State Find the SVD: $BD = USV^\top$
            \State $D \leftarrow S^{\frac{1}{2}}$
            \State $B \leftarrow UD$
            \EndIndent
            \State Update $A$ leaving $B$ fixed:
            \Indent
            \State $A \leftarrow X B(D^2 + \lambda I_{r\times r})^{-1}$
            \State Find the SVD: $AD = USV^\top$
            \State $D \leftarrow S^{\frac{1}{2}}$
            \State $A \leftarrow UD$
            \EndIndent

    \EndWhile
\end{algorithmic}
\end{algorithm}
\end{minipage}\hspace{5pt}
\begin{minipage}{0.48\linewidth}
\begin{algorithm}[H] 
\centering
\caption{Modified Rank-Restricted Soft SVD \vspace{2.5pt} } \label{alg3}
\begin{algorithmic} 
	\State {\bf Inputs:} An $n\times m$ matrix $X$, \\ \quad\quad\quad\quad Rank restriction $r$, and \\ \quad\quad\quad\quad Regularization parameter $\lambda$
	\State {\bf Outputs:} An $n\times r$ matrix $A$ and \\ \quad\quad\quad\quad\quad An $m\times r$ matrix $B$
	\State
    \State Initialize $D = I_{r \times r}$
    \State Initialize $U \in \mathbb{R}^{n\times r}$ a random \\ \quad orthonormal matrix
    \State Initialize $A = U D$  and $A_{\rm p} = B_{\rm p} = 0$
    \While{$\frac{||A-A_{\rm p}||_{\max}}{||A||_{\max}} + \frac{||B-B_{\rm p}||_{\max}}{||B||_{\max}} > {\rm tol}$}
            \State Set $A_{\rm p} = A$, $B_{\rm p} = B$
            \State Update $B$ leaving $A$ fixed:
            \Indent
            \State $B \leftarrow X^\top A(D^2 + \lambda I_{r\times r})^{-1}$
            \State Find the SVD: $BD = USV^\top$
            \State $D \leftarrow S^{\frac{1}{2}}$,  $W = \text{diag(sign}(V^\top \vec 1))$
            \State $B \leftarrow UWD$
          
            \EndIndent
            \State Update $A$ leaving $B$ fixed:
            \Indent
            \State $A \leftarrow X B(D^2 + \lambda I_{r\times r})^{-1}$
            \State Find the SVD: $AD = USV^\top$
            \State $D \leftarrow S^{\frac{1}{2}}$,  $ W=\text{diag(sign(}V^\top \vec 1))$
          
            \State $A \leftarrow U W D$
            \EndIndent
    \EndWhile
\end{algorithmic}
\end{algorithm}
\end{minipage}
%\end{figure}

\subsection{Proposed Algorithm}

Despite the similarity of Algorithm \ref{alg2} to orthogonal power iteration, there is a key difference which can cause Algorithm \ref{alg2} to fail to converge.  Orthogonal power iteration uses the QR factorization, which is naturally unique when you specify that the the diagonal entries of $R$ are non-negative.  The SVD on the other hand does not have a natural choice of sign for the singular vectors \cite{Bro}.  The SVD is only unique up to a choice of sign since for any matrix $W$ which is diagonal with diagonal entries in $\{-1,1\}$ we have,
\[ USV^\top = UWSWV^\top = \tilde U S \tilde V^\top. \]
This non-uniqueness means that many SVD algorithms will return different choices of $W$ each time they are run (due to random initialization).  This can lead to failure of Algorithm \ref{alg2} to converge, simply due to oscillations in $A$ and $B$ caused by varying implicit choices of $W$ in the SVD steps.  Moreover, as we will show in Section \ref{signs}, the different choices of $W$ correspond to alternate stationary points of the cost function in \eqref{rrss}.  

To address these issues, we introduce Algorithm \ref{alg3} which is a modification of Algorithm \ref{alg2}.  The new aspect of Algorithm \ref{alg3} is that, after each SVD, we make a unique choice of sign for the left singular vectors.  This seemingly minor addition proves critical for convergence as shown in Figure~\ref{fig2} and as we will prove analytically in Section \ref{valConvergence} below.  In fact, we will show that this choice of sign insures that the matrices $V$ of right singular vectors converge to the identity matrix and that this choice is required to obtain the optimal solution of \eqref{rrss}.

\begin{figure}[h]
\includegraphics[width=0.45\linewidth]{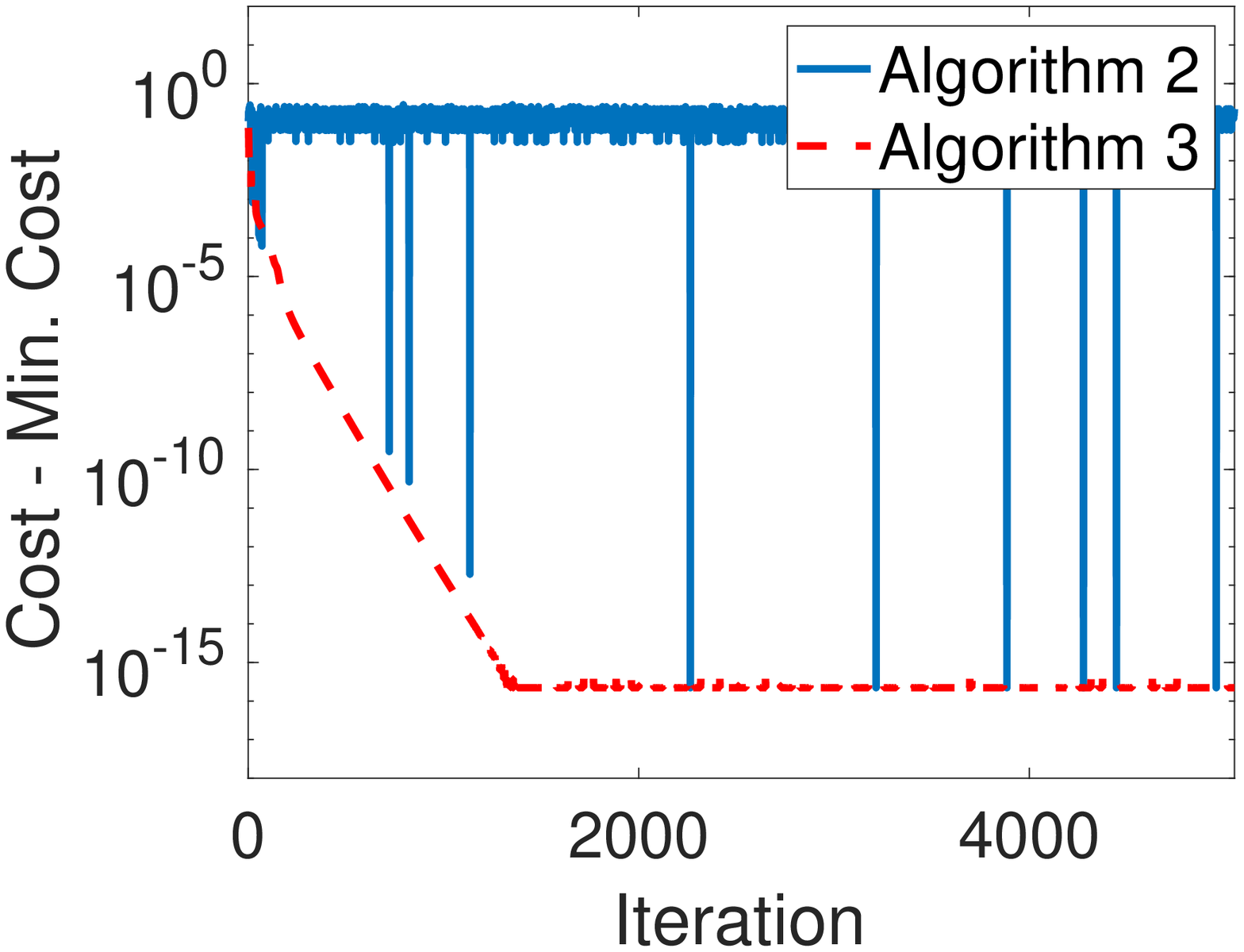}\hspace{10pt}\includegraphics[width=0.45\linewidth]{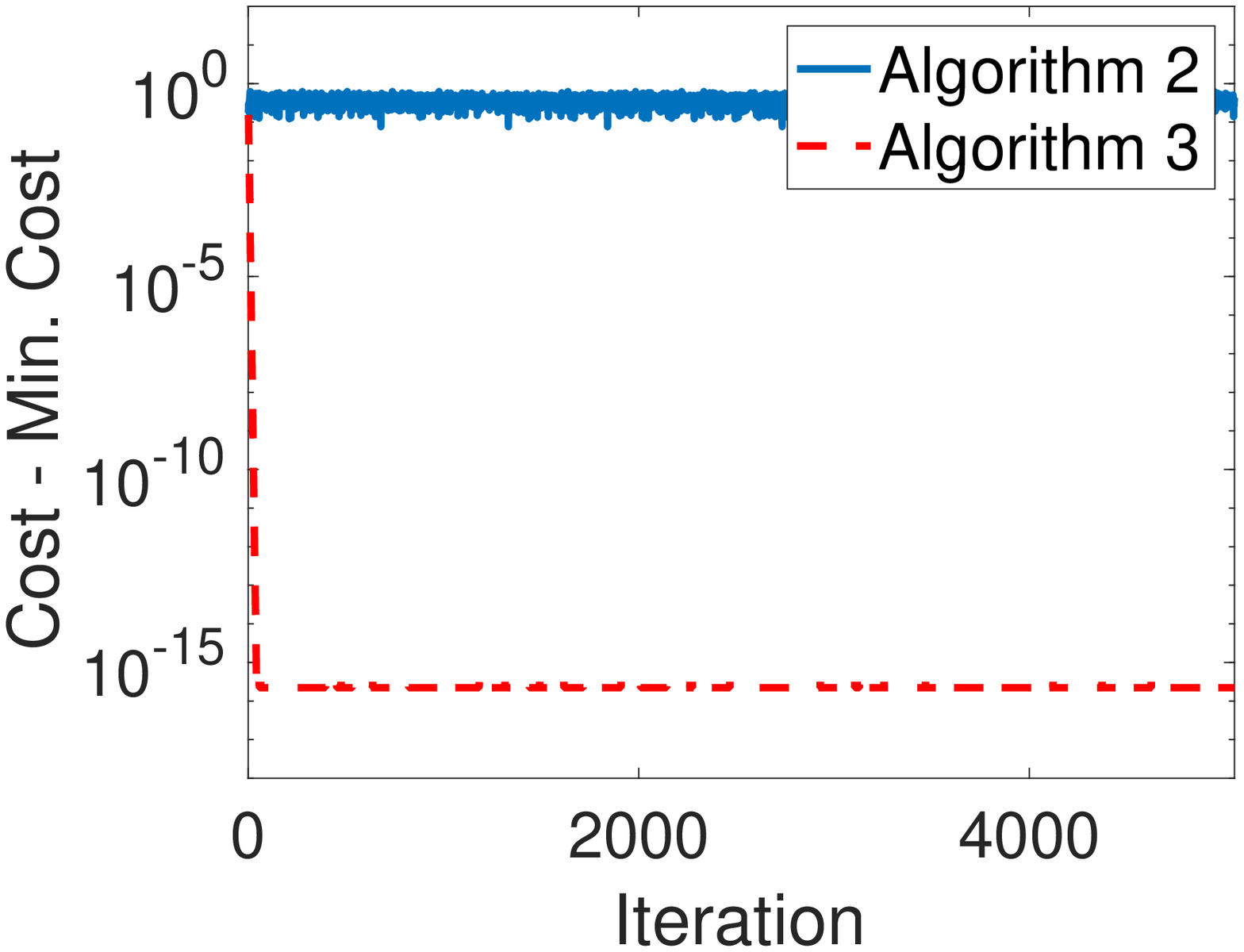}
\caption{\label{fig2} Comparison of Algorithm \ref{alg2} from \cite{Hastie} with our new Algorithm \ref{alg3} on the same full-rank (left) and approximately low-rank (right) examples from Figure~\ref{fig1}.}
\end{figure}

We will formalize Algorithm \ref{alg3} mathematically since Algorithm \ref{alg2} can then be obtained by simply redefining the choice of $W$.  Based on Algorithm \ref{alg3} we make the following recursive definitions,
\begin{subequations}
\begin{align}
B_{k+1} &= X^\top U_k W_k D_k(D_k^2 + \lambda I)^{-1}  						\label{Beq} \\
\tilde U_{k}\tilde W_{k}\tilde D_{k}^2\tilde W_{k}\tilde V_{k}^\top &= B_{k+1}D_{k}  \label{svd1} \\
A_{k+1} &= X\tilde U_k\tilde W_k \tilde D_k(\tilde D_k^2 + \lambda I)^{-1} 				\label{Aeq} \\
 U_{k+1} W_{k+1} D_{k+1}^2 W_{k+1} V_{k+1}^\top &= A_{k+1}\tilde D_{k}		\label{svd2}
\end{align}
\end{subequations}
where \eqref{svd1} and \eqref{svd2} define all the quantities on the left hand side by computing the SVD of the right hand side.  We initialize $\tilde D_{-1} = D_0 = W_0 = I$ and choose $U_0$ to be a random orthonormal $n\times r$ matrix and set $A_0 = U_0 W_0 D_0$.   

The matrices $W_k, \tilde W_k$ are diagonal matrices where each diagonal entry is either $1$ or $-1$.  These matrices define the choice of signs for the left and right singular vectors resulting from the SVD computation.  In fact, due to random initializations of most SVD algorithms, the matrices $W_k,\tilde W_k$ are typically random and will be different each time the SVD algorithm is run.  As we will see, this will be the cause of the erratic behavior of the cost function in Algorithm \ref{alg2} as shown in Figure~\ref{fig2}.

A more concise iteration can be obtained by solving solving \eqref{svd2} (at the previous step) for $U_{k} W_{k} D_{k} = A_k \tilde D_{k-1} V_k W_k D_k^{-1}$ and substituting into \eqref{Beq} we have,
\begin{align} \label{Beq2} B_{k+1} = X^\top A_{k}\tilde D_{k-1} V_{k} W_{k}D_k^{-1}(D_k^2 + \lambda I)^{-1}. \end{align}
Similarly, solving \eqref{svd1} for $\tilde U_k \tilde W_k\tilde D_k = B_{k+1} D_k \tilde V_k \tilde W_k  \tilde D_k^{-1}$ and by substituting into \eqref{Aeq} we can write,
\begin{align}\label{Aeq2}  A_{k+1} = X B_{k+1}D_k \tilde V_k \tilde W_k \tilde D_k^{-1}(\tilde D_k^2 + \lambda I)^{-1}. \end{align}
Here we can immediately see that the product $A_{k+1}B_{k+1}^\top$ will not converge unless the signed right singular vectors $\tilde V_k \tilde W_k, W_{k}V_{k}^\top$ of \eqref{svd1},\eqref{svd2} converge since,
\[ A_{k+1}B_{k+1}^\top = X B_{k+1}D_k \tilde V_k \tilde W_k \tilde D_k^{-1}(\tilde D_k^2 + \lambda I)^{-1}(D_k^2 + \lambda I)^{-1}D_k^{-1}W_{k}V_{k}^\top \tilde D_{k-1}A_{k}^\top X. \]
This explains the jumps of Algorithm \ref{alg2} shown in Figure~\ref{fig2}.  

\subsection{Overview}

In Section \ref{vecConvergence} we will show that, in an appropriate sense, we have $U_k\to U$ and $\tilde U_k \to V$.  Then, in Section \ref{valConvergence}, we turn to the singular values and show that $D_k,\tilde D_k$ both converge to $I_{r\times p} D I _{p\times r}$ given by the softmax function $D = \sqrt{(S-\lambda I)^+}$. Finally, in Section \ref{signs} we will show that $V_k,\tilde V_k$ converge to diagonal matrices determined by the choice of $W_k,\tilde W_k$. We will see that any convergent choice for the diagonal sign matrices $W_k,\tilde W_k$ will yield a convergent algorithm.  These results will culminate in Theorem \ref{finalthm} which reveals that, assuming $\tilde W_k \to \tilde W_*$ and $W_k \to W_*$, we have the limiting matrices, 
\begin{align*}
A_k &\to A_* = US D(D^2 + \lambda I)^{-1} I_{p\times r} \tilde W_* \\
B_k &\to B_* = VS D(D^2+\lambda I)^{-1} I_{p\times r} W_*
\end{align*}
for Algorithm \ref{alg3}.  Moreover, the dependence of the first term of the cost function \eqref{rrss} on the sign matrices is given by,
\begin{align}\label{mincost} ||X-A_*B_*^\top||_F =  ||S - S^2 D^2 (D^2+\lambda I)^{-2} I_{p\times r} \tilde W_* W_*  I_{r\times p}||_F \end{align}
 and only the choice $\tilde W_* W_* = I$ will minimize the cost.  When $\lambda < S_{rr}$ the above cost simplifies to,
 \[ ||X-A_*B_*^\top||_F =  ||S - (S-\lambda)^+ I_{p\times r} \tilde W_* W_*  I_{r\times p}||_F \]
 which is optimal when $\tilde W_* W_* = I$.  This explains the large cost values for Algorithm \ref{alg2} shown in Figure~\ref{fig2} since the random $W_k,\tilde W_k$ essentially replace $\tilde W_*, W_*$ with random sign matrices.  Of course, occasionally these random sign matrices yield $\tilde W_k W_k = I$, which explains why the cost sometimes jumps down to the optimal cost.  This also justifies our choice in Algorithm \ref{alg3} where $W_k,\tilde W_k$ are chosen to insure that the sum of the columns of $W_k V_k$ and $\tilde W_k \tilde V_k$ are positive.  As $V_k,\tilde V_k$ converge to diagonal matrices, this choice will guarantee that $\tilde W_* W_* = I$, thereby obtaining the minimal cost solution.

\section{Convergence of the Singular Vectors}\label{vecConvergence}

The first part of proving the convergence of Algorithm \ref{alg3} is showing that the sequences $U_k$ and $\tilde U_k$ defined in \eqref{svd1} and \eqref{svd2} converge to the top $r$ left and and right singular vectors of $X$ respectively.  In other words, if $X=USV^\top$ is the SVD of $X$ then loosely speaking we have $U_k \to U_{(1:r)}$ and $V_k\to V_{(1:r)}$ where the subscript $(1:r)$ indicates the first through $r$-th columns of the matrix.  The reason we say `loosely speaking' is due to the non-uniqueness of sign in the singular vectors, even for unique singular values (for repeated singular values we only have uniqueness up to orthogonal linear transformations).  Thus, the first column of $U_k$ could alternate between that of $U$ and its negative and this would still be considered convergence since we would have obtained the correct subspace.  

We define convergence in terms of the norm of the matrix of inner products $||U_k^\top U_{(r+1:n)}||$ converging to $0$ (any matrix norm can be used since this always implies $U_k^\top U_{(r+1:n)}$ is zero).  Since $U_k U_k^\top = I_{r\times r}$, the columns of $U_k$ span an $r$-dimensional subspace, so if $U_k^\top U_{(r+1:n)} = 0$ this subspace is orthogonal to the subspace spanned by the last $n-r$ columns of $U$.   Thus, $||U_k^\top U_{(r+1:n)}||_{\max}\to 0$ implies that the subspace spanned by the columns of $U_k$ is aligning with the subspace spanned by the first $r$ columns of $U$.  As shown in Figure~\ref{fig3} we have $||U_k^\top U_{(r+1:n)}||_{\max}\to 0$ for both Algorithm \ref{alg2} and Algorithm \ref{alg3}.  

In this section we will prove that this convergence is independent of the choice of $W_k,\tilde W_k$ and show that the convergence rate is determined by the ratio of the $(r+1)$-st and $r$-th squared singular values of $X$.  In particular, when $X$ is low-rank or approximately low-rank, this will imply the fast convergence observed in Figure~\ref{fig1}.  We first note that the iteration \eqref{Beq}-\eqref{svd2} is rank preserving in the generic case when $X$ is full-rank.

\begin{figure}[h]
\includegraphics[width=0.45\linewidth]{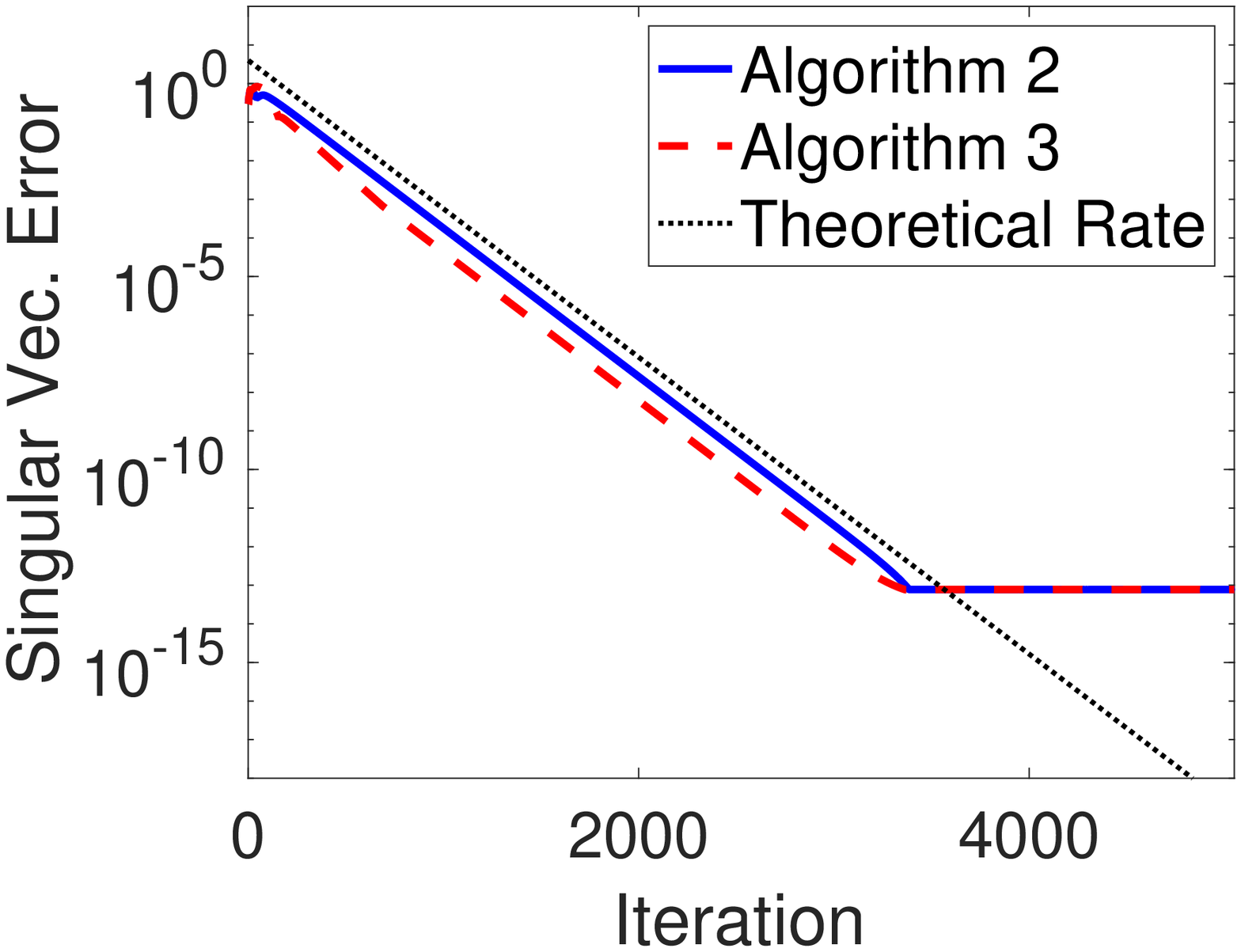}\hspace{10pt}\includegraphics[width=0.45\linewidth]{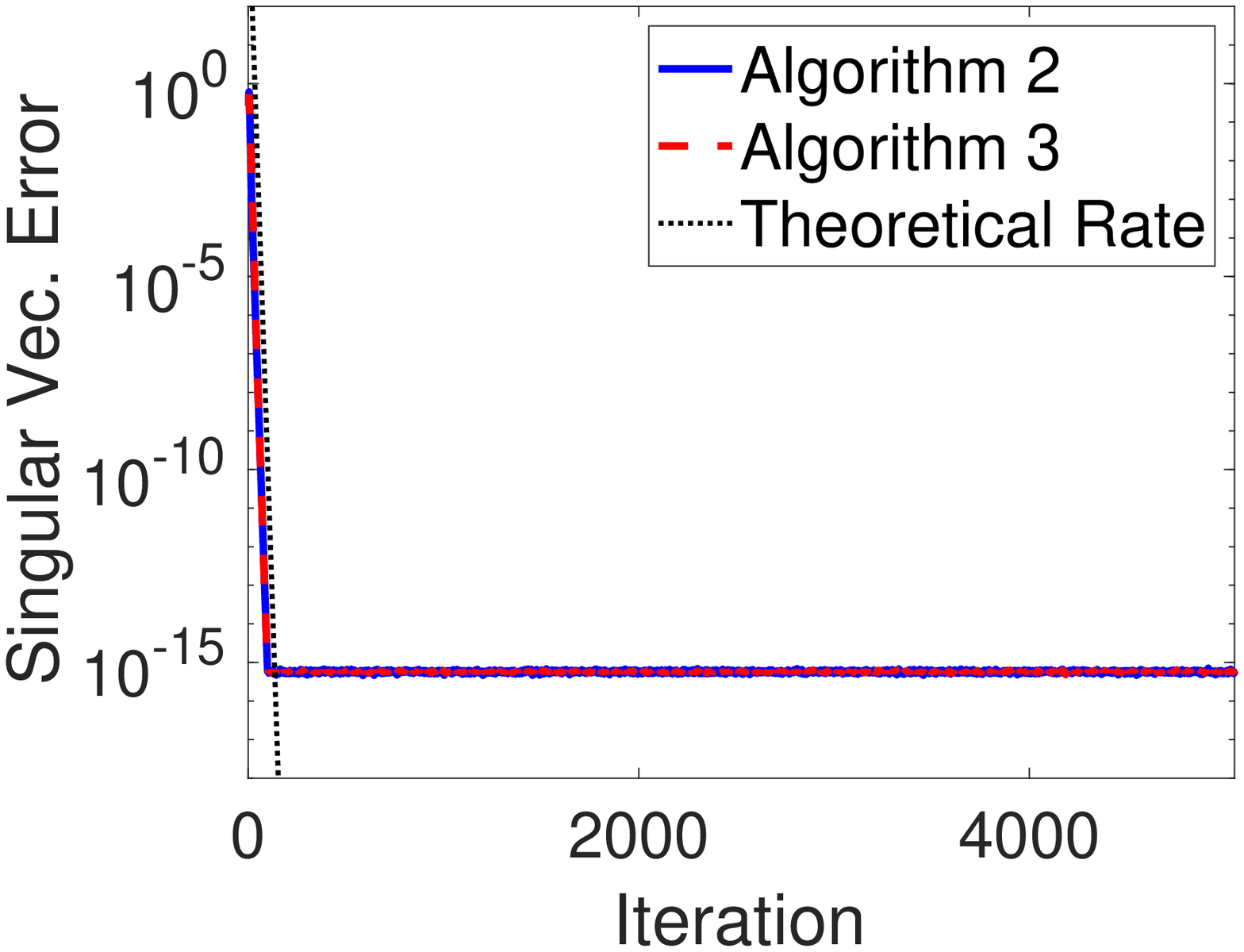}
\caption{\label{fig3} Comparison of the convergence of the singular vectors on the same full-rank (left) and approximately low-rank (right) examples from Figure~\ref{fig1}.  Error is measured by $||U_k^\top U_{(r+1:n)}||_{\max}$, where $U_{(r+1:n)}$ is the matrix containing the $(r+1)$-st through $n$-th columns of $U$.  The theoretical convergence rate $\left(\frac{s_{r+1}}{s_r}\right)^2$ shown is proven in Theorem \ref{thm1} .  Notice that the singular vectors converge for both Algorithm \ref{alg2} from \cite{Hastie} and our new Algorithm \ref{alg3} .}
\end{figure}

\begin{lemma} \label{FullRank1}
 Let $X \in \mathbb{R}^{n\times m}$, have full rank, namely ${\rm rank}(X) = \min\{m,n\}$, then for all $k$ the matrices $A_{k}, B_{k}, U_k, W_k, D_k, V_k, \tilde U_k, \tilde W_k, \tilde D_k,\tilde V_k$ defined by the iteration \eqref{Beq}-\eqref{svd2} are all full rank. 
\end{lemma}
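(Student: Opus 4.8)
The plan is to prove the statement by induction on $k$, and everything hinges on two elementary observations. The first is that, because $\mathrm{rank}(X)=p$, we have $\mathrm{col}(X)=(\ker X^\top)^\perp$ and $\mathrm{col}(X^\top)=(\ker X)^\perp$; hence $X^\top$ restricted to $\mathrm{col}(X)$ is injective, and $X$ restricted to $\mathrm{col}(X^\top)$ is injective. The second is a bookkeeping fact: right-multiplying an $n\times r$ matrix by an invertible $r\times r$ matrix changes neither its rank nor its column space, and once $\tilde D_k$ and $D_{k+1}$ are known invertible one may solve \eqref{svd1} and \eqref{svd2} for the left singular vectors, obtaining $\tilde U_k = B_{k+1}D_k\,\tilde V_k\tilde W_k\tilde D_k^{-2}\tilde W_k$ and $U_{k+1} = A_{k+1}\tilde D_k\,V_{k+1}W_{k+1}D_{k+1}^{-2}W_{k+1}$, so that $\mathrm{col}(\tilde U_k)=\mathrm{col}(B_{k+1})\subseteq\mathrm{col}(X^\top)$ and $\mathrm{col}(U_{k+1})=\mathrm{col}(A_{k+1})\subseteq\mathrm{col}(X)$, the inclusions being immediate from \eqref{Beq} and \eqref{Aeq} since $B_{k+1}$ and $A_{k+1}$ equal $X^\top$ and $X$ times something.

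With these in hand the induction is short. I would first note that the claim has content only for $D_k,\tilde D_k$, which must be invertible (equivalently, have strictly positive diagonal entries), and for $A_k,B_k$, which must have rank $r$, because $U_k,\tilde U_k$ are orthonormal, $V_k,\tilde V_k$ orthogonal, and $W_k,\tilde W_k$ are $\pm1$ diagonal and hence automatically full rank. For the base case $k=0$ we have $\tilde D_{-1}=D_0=W_0=I$ and $A_0=U_0W_0D_0=U_0$, of rank $r$. For the inductive step, assume $A_k,U_k,D_k,W_k,V_k$ and $\tilde D_{k-1}$ are full rank. In \eqref{Beq} the trailing factor $W_kD_k(D_k^2+\lambda I)^{-1}$ is invertible, so $\mathrm{rank}(B_{k+1})=\mathrm{rank}(X^\top U_k)$; for $k\ge1$ we have $\mathrm{col}(U_k)\subseteq\mathrm{col}(X)$ from the previous step, and $X^\top$ is injective there, so $B_{k+1}$, hence $B_{k+1}D_k$, has rank $r$, forcing $\tilde D_k$ invertible and then $\tilde U_k,\tilde W_k,\tilde V_k$ full rank in \eqref{svd1}. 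Running the mirror argument on \eqref{Aeq}, with $\mathrm{col}(\tilde U_k)=\mathrm{col}(B_{k+1})\subseteq\mathrm{col}(X^\top)$ the relevant subspace and $X$ injective there, gives $\mathrm{rank}(A_{k+1})=\mathrm{rank}(X\tilde U_k)=r$, so $A_{k+1}\tilde D_k$ has rank $r$ and \eqref{svd2} yields full-rank $U_{k+1},D_{k+1},W_{k+1},V_{k+1}$; this simultaneously re-establishes $\mathrm{col}(U_{k+1})\subseteq\mathrm{col}(X)$, closing the induction.

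The one step I expect to need genuine care is the first application of $X^\top$, namely $B_1=(1+\lambda)^{-1}X^\top U_0$, since the columns of $U_0$ are not a priori in $\mathrm{col}(X)$. When $n\le m$ there is nothing to check, as $\ker X^\top=\{0\}$. When $n>m$ I would argue that a randomly drawn orthonormal $U_0$ almost surely has column space intersecting $\ker X^\top$ trivially --- which is possible precisely because $r\le p$ --- so the conclusion holds for the algorithm as actually run (with probability one over the random initialization), which is presumably what ``generic'' refers to in the remark preceding the lemma. Apart from this base-case genericity and the tracking of which column spaces propagate through the iteration, the argument reduces to the routine observation that the explicit $r\times r$ factors appearing in \eqref{Beq}-\eqref{svd2} are invertible, so I do not anticipate further difficulty.
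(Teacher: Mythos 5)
Your proof is correct, and it follows the same overall route as the paper's: an induction that walks through \eqref{Beq}, \eqref{svd1}, \eqref{Aeq}, \eqref{svd2} in order, using the invertibility of the explicit $r\times r$ factors and the fact that the SVD of a full-rank matrix has full-rank factors. The one place you diverge is actually an improvement. The paper simply asserts that ${\rm rank}(B_{k+1})={\rm rank}(A_k)$ ``since $X$ is full rank,'' but when $n>m$ the map $X^\top$ has a nontrivial kernel of dimension $n-m$, so full-rankness of $X$ alone does not guarantee that left-multiplication by $X^\top$ preserves the rank of an $n\times r$ matrix. Your column-space bookkeeping --- $\mathrm{col}(U_k)\subseteq\mathrm{col}(X)=(\ker X^\top)^\perp$ for $k\ge 1$ and $\mathrm{col}(\tilde U_k)\subseteq\mathrm{col}(X^\top)=(\ker X)^\perp$, on which $X^\top$ and $X$ are respectively injective --- closes that gap for all steps after the first, and your observation that the base case $B_1\propto X^\top U_0$ needs the random $U_0$ to generically avoid $\ker X^\top$ makes precise the genericity the paper only attributes to $U_0$ being full rank. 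So: same induction, but your version is the rigorous one.
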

\begin{proof}
The algorithm is initialized with $A_0 = U_0 W_0 D_0$, where $U_0$ is a random matrix and thus generically full rank and $D_0 = W_0 = I$ is full rank.  By \eqref{Beq} we have $B_{k+1} = X^\top A_k(D_k^2+\lambda I)^{-1}$ and since $X$ and $D_k^2+\lambda I$ are full rank, we have ${\rm rank}(B_{k+1}) = {\rm rank}(A_k)$.  This establishes the base case, and if we inductively assume $A_k,D_k$ are full rank we immediately find that $B_{k+1}$ is full rank and thus $B_{k+1}D_k$ is also full rank.  Since the right-hand-side of \eqref{svd1} is full rank, all the matrices $\tilde U_k,\tilde W_k,\tilde D_k,\tilde V_k$ on the left-hand-side of \eqref{svd1} are full rank since they are defined to be the SVD of a full rank matrix.  By \eqref{Aeq} we have $A_{k+1}$ written as a product of full rank matrices and thus $A_{k+1}$ is full rank.  Finally, the right-hand-side of \eqref{svd2} is now full rank which implies that all the matrices on the left-hand-side, $U_{k+1},W_{k+1},D_{k+1},V_{k+1}$ are all full rank.  This completes the induction.
\end{proof}

When $X$ is not full rank, generically the random initial matrix $U_0$ will not be orthogonal to the subspace spanned by the rows of $X$ and since $B_1 = X^\top A_0/(1+\lambda)$ we find ${\rm rank}(B_1) = \min\{{\rm rank}(X),{\rm rank}(A_0)\}$.  Note that since $D_0=I$ we have $(D_0^2+\lambda I)^{-1} = I/(1+\lambda)$.  When ${\rm rank}(X)\geq r$ we expect all of the matrices in Lemma \ref{FullRank1} to have rank $r$ and when ${\rm rank}(X)<r$ they should all have rank equal to ${\rm rank}(X)$.  However, showing that $U_k$ does not evolve to become orthogonal to the span or the rows of $X$ requires Theorem \ref{thm1} below.  

The next step is to make a connection between the iteration \eqref{Beq}-\eqref{svd2} and the SVD of $X$.  In the next lemma we show how the \eqref{Beq} followed by \eqref{Aeq} is related to multiplication by $XX^\top$ and similarly \eqref{Aeq} followed by \eqref{Beq} is related to multiplication by $X^\top X$.

\begin{lemma}\label{products} Let $X \in \mathbb{R}^{n\times m}$, and using the notation of \eqref{Beq}-\eqref{svd2} define
\begin{align} 
P_{k+1} &\equiv D_{k+1}^2 V_{k+1}^\top (\tilde D_k^2 + \lambda I)\tilde W_k \tilde V_k^\top D_k^{-2} (D_k^2 + \lambda I) W_k \nonumber  \\
\tilde P_{k+1} &\equiv \tilde  D_{k+1}^2 \tilde V^\top_{k+1} (D_{k+1}^2 +\lambda I) W_{k+1} V^\top_{k+1} \tilde D_k^{-2} (\tilde D_k ^{2} +\lambda I ) \tilde W_k \nonumber
\end{align}
then 
\begin{align} 
XX^\top U_k &= U_{k+1} P_{k+1} &\hspace{20pt} (XX^\top)^k U_0 &= U_k P_k \cdots P_1 \nonumber \\
X^\top X \tilde U_k &= \tilde U_{k+1} \tilde P_{k+1} &\hspace{20pt} (X^\top X)^k \tilde U_0 &= \tilde U_k \tilde P_k \cdots \tilde P_1 \nonumber
\end{align}
and the products
\begin{align} 
Q_k  &\equiv D_{k}^{-2}P_k \cdots P_1 \nonumber \\ &=   V_{k}^\top \left(\prod_{i=1}^{k-1} (\tilde D_i^2 + \lambda I)\tilde W_i \tilde V_i^\top (D_i^2 + \lambda I) W_i V_{i}^\top \right) (\tilde D_0^2 + \lambda I)\tilde W_0 V_0^\top (1+\lambda) \nonumber  \\
\tilde Q_k &\equiv \tilde D_{k}^{-2} \tilde P_k \cdots \tilde P_1 = \tilde V^\top_{k} (D_{k}^2 +\lambda I) W_{k} Q_{k}  \nonumber 
\end{align}
are invertible with inverses bounded by $||Q_k^{-1}|| \leq \lambda^{1-2k}$, and $||\tilde Q_k^{-1}||  \leq \lambda^{2-2k}$.
\end{lemma}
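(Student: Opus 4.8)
Everything in the statement is pure linear algebra over the recursion \eqref{Beq}--\eqref{svd2}, and the only structural facts needed are that $D_k,\tilde D_k,W_k,\tilde W_k$ are diagonal (hence mutually commuting), that $W_k,\tilde W_k$ are their own inverses, that $U_k,V_k,\tilde U_k,\tilde V_k$ have orthonormal columns, and that Lemma~\ref{FullRank1} makes $D_k,\tilde D_k$ (and the matrices $D^{2}+\lambda I$, $\tilde D^{2}+\lambda I$) invertible. \textbf{Step 1 (one-step recursions).} I would obtain $XX^\top U_k=U_{k+1}P_{k+1}$ by a substitution chase: solve \eqref{svd2} at step $k$ for $U_{k+1}=A_{k+1}\tilde D_kV_{k+1}D_{k+1}^{-2}$, use \eqref{Aeq} to eliminate $A_{k+1}$, then \eqref{svd1} at step $k$ to write $\tilde U_k=B_{k+1}D_k\tilde V_k\tilde D_k^{-2}$, then \eqref{Beq} to eliminate $B_{k+1}$; collecting factors and commuting the diagonal ones past each other (via identities such as $D^{-1}(D^{2}+\lambda I)^{-1}D=(D^{2}+\lambda I)^{-1}$) leaves exactly $XX^\top U_k$ times $P_{k+1}^{-1}$. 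In practice it is quickest to verify $U_{k+1}P_{k+1}=XX^\top U_k$ directly by plugging in the four relations and watching the diagonal pieces telescope down to $W_kW_k=I$. The mirror chase along \eqref{svd1} (step $k+1$), \eqref{Beq} (step $k+2$), \eqref{svd2} and \eqref{Aeq} (step $k$) gives $X^\top X\tilde U_k=\tilde U_{k+1}\tilde P_{k+1}$, and the two iterated identities then follow by a one-line induction on $k$ (empty product at $k=0$).

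\textbf{Step 2 (telescoping $Q_k,\tilde Q_k$).} Writing $P_j=D_j^{2}V_j^\top(\tilde D_{j-1}^{2}+\lambda I)\tilde W_{j-1}\tilde V_{j-1}^\top D_{j-1}^{-2}(D_{j-1}^{2}+\lambda I)W_{j-1}$, the key observation is that in the product $P_jP_{j-1}$ the tail $D_{j-1}^{-2}(D_{j-1}^{2}+\lambda I)W_{j-1}$ of $P_j$ meets the head $D_{j-1}^{2}$ of $P_{j-1}$, and since these are all diagonal this ``junction'' collapses to $(D_{j-1}^{2}+\lambda I)W_{j-1}$, i.e.\ the inverse power $D_{j-1}^{-2}$ is annihilated. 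Cascading this collapse down $P_k\cdots P_1$, with the $D_0=W_0=I$ initialization at the right end producing the scalar $1+\lambda$, leaves precisely the alternating product in the statement; premultiplying by $D_k^{-2}$ cancels the leading $D_k^{2}$ and gives $Q_k$. The identical manipulation on $\tilde P_k\cdots\tilde P_1$ produces $\tilde Q_k$; now the collapse removes every $\tilde D_j^{-2}$ with $j\ge1$, but the $\tilde D_0^{-2}$ at the extreme right survives, appearing only inside the combination $\tilde D_0^{-2}(\tilde D_0^{2}+\lambda I)=I+\lambda\tilde D_0^{-2}$. A short comparison of the two telescoped forms then yields the displayed relation $\tilde Q_k=\tilde V_k^\top(D_k^{2}+\lambda I)W_kQ_k$.

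\textbf{Step 3 (invertibility and norm bounds).} Every factor occurring in $Q_k$ (and in $\tilde Q_k$) is of one of three benign types: an orthogonal matrix ($V_i^\top$, $\tilde V_i^\top$), a diagonal sign matrix ($W_i$, $\tilde W_i$), or a positive diagonal matrix of the form $D^{2}+\lambda I$ or the combination $I+\lambda\tilde D_0^{-2}$, together with the scalar $1+\lambda\ge1$. A diagonal matrix with all entries $\ge\lambda$ is invertible with inverse of operator norm $\le\lambda^{-1}$; one with all entries $\ge1$ has inverse of norm $\le1$; orthogonal and sign matrices have inverses of norm $1$. Hence $Q_k,\tilde Q_k$ are invertible, and $\|Q_k^{-1}\|$, $\|\tilde Q_k^{-1}\|$ are bounded by the products of these per-factor bounds. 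Counting how many factors of the type $D^{2}+\lambda I$ or $\tilde D^{2}+\lambda I$ appear in the telescoped product for $Q_k$ — there are $2k-1$ of them, the remaining factors contributing at most $1$ — gives $\|Q_k^{-1}\|\le\lambda^{1-2k}$; the analogous count, equivalently tracking the single extra $(D_k^{2}+\lambda I)^{-1}$ introduced by the relation $\tilde Q_k=\tilde V_k^\top(D_k^{2}+\lambda I)W_kQ_k$, gives the stated bound on $\|\tilde Q_k^{-1}\|$.

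\textbf{Anticipated difficulty.} There is no conceptual obstacle here; the work is entirely bookkeeping. The delicate parts are keeping the long strings of diagonal and orthogonal matrices in the correct left-to-right order while commuting only the diagonal ones, tracking exactly which inverse powers $D_j^{-2}$ and $\tilde D_j^{-2}$ are cancelled by a junction and which one survives, and handling the $k=0$ and $k=1$ boundary cases forced by the initialization $\tilde D_{-1}=D_0=W_0=I$. The one place a little cleverness is needed, rather than mere computation, is recognizing that the leftover $\tilde D_0^{-2}$ must be kept glued to $(\tilde D_0^{2}+\lambda I)$ as $I+\lambda\tilde D_0^{-2}$, since $\tilde D_0^{-2}$ by itself is not uniformly bounded; this regrouping is what makes the clean powers of $\lambda$ in the two norm bounds come out.
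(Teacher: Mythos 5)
Your proposal is correct and follows essentially the same route as the paper's proof: a substitution chase through \eqref{Beq}--\eqref{svd2} for the one-step identities $XX^\top U_k=U_{k+1}P_{k+1}$ and $X^\top X\tilde U_k=\tilde U_{k+1}\tilde P_{k+1}$, telescoping of the diagonal junctions to obtain $Q_k,\tilde Q_k$, and per-factor bounds (orthogonal and sign factors contributing $1$, factors of the form $D^2+\lambda I$ contributing $\lambda^{-1}$) for the inverses. One remark: your own (correct) observation that the factor $\tilde D_0^{-2}$ survives at the right end of $\tilde P_k\cdots\tilde P_1$ shows that the displayed identity $\tilde Q_k=\tilde V_k^\top(D_k^{2}+\lambda I)W_kQ_k$ cannot hold verbatim --- the telescoped $\tilde Q_k$ ends in $V_1^\top\tilde D_0^{-2}(\tilde D_0^{2}+\lambda I)\tilde W_0$ while the right-hand side ends in $V_1^\top(\tilde D_0^{2}+\lambda I)\tilde W_0\tilde V_0^\top(1+\lambda)$ (and the $V_0^\top$ in the stated formula for $Q_k$ should likewise be $\tilde V_0^\top$) --- so you should not claim that a ``short comparison'' recovers that relation exactly; this is a typo-level discrepancy in the lemma's statement, which the paper's proof glosses over with ``a simple induction,'' and it is harmless because your regrouping $\tilde D_0^{-2}(\tilde D_0^{2}+\lambda I)=I+\lambda\tilde D_0^{-2}$ (inverse of norm at most $1$) still gives invertibility and a bound of the form $\lambda^{1-2k}$, which is all that is ever used downstream.
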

\begin{proof}
We first solve \eqref{Beq} for $X^\top U_k = B_{k+1}(D_k^2 + \lambda I)D_k^{-1} W_k$ to obtain, 
\begin{align} XX^\top U_k &= X B_{k+1}(D_k^2 + \lambda I)D_k^{-1} W_k  \nonumber \\ 
& = A_{k+1}(\tilde D_k^2 + \lambda I)\tilde D_k \tilde W_k \tilde V_k^\top D_k^{-1} (D_k^2 + \lambda I)D_k^{-1} W_k  \nonumber\\
&= U_{k+1} D_{k+1}^2 V_{k+1}^\top (\tilde D_k^2 + \lambda I)\tilde W_k \tilde V_k^\top D_k^{-2} (D_k^2 + \lambda I) W_k
\end{align}
where the second equality follows from \eqref{Aeq2} and the last follows from \eqref{svd2} after rearranging the diagonal matrices.  The definition of $P_k$ then immediately yeilds $XX^\top U_k = U_{k+1} P_{k+1}$ and a similar computation shows $XX^\top \tilde U_k = \tilde U_{k+1} \tilde P_{k+1}$.

The formulas for $Q_k$ and $\tilde Q_k$ follow by a simple induction using the formulas for $P_k,\tilde P_k$.  Note that $Q_k,\tilde Q_k$ are products of diagonal matrices (with non-zero diagonal entries), sign matrices and orthogonal matrices and thus are both invertible.  Moreover, since $\lambda>0$ we have the upper bound,
\[ ||Q_k^{-1}|| \leq \left(\prod_{i=1}^{k-1} \frac{1}{||\tilde D_i^2 + \lambda I || \, ||D_i^2 + \lambda I ||} \right) \frac{1}{||\tilde D_{0}^2 + \lambda I || (1+\lambda)} \leq \lambda^{1-2k} \]
 and $||\tilde Q_k^{-1}|| \leq \frac{||Q_k^{-1}||}{||D_k^2 +\lambda I||} \leq \lambda^{2-2k}$.
\end{proof}

In order to connect the iteration \eqref{Beq}-\eqref{svd2} to the singular vectors of $X$ we will use the formulas,
\[ (XX^\top)^k U_0 = U_k D_k^2 Q_k, \hspace{30pt} (X^\top X)^k \tilde U_0 = \tilde U_k \tilde D_k^2 \tilde Q_k  \]
which follow from Lemma \ref{products}.  Substituting the SVD of $X = USV^\top$ results in,
\[ US^{2k} U^\top U_0 = U_k D_k^2 Q_k, \hspace{30pt} VS^{2k} V^\top \tilde U_0 = \tilde U_k \tilde D_k^2 \tilde Q_k \]
and using the invertibility of the $D_k,\tilde D_k,Q_k,\tilde Q_k$ matrices we have,
\begin{align}\label{Uiter} 
U^\top U_k = S^{2k} U^\top U_0 D_k^{-2} Q_k^{-1} , \hspace{30pt}   V^\top \tilde U_k = S^{2k} V^\top \tilde U_0 \tilde D_k^{-2} \tilde Q_k^{-1}.  
\end{align}
Notice that we have again rearranged the diagonal matrices.

%We should note that one could naively apply the upper bounds,
%\[ ||U^\top U_k|| \leq ||U^\top U_0|| S^{2k}  D_k^{-2} \lambda^{1-2k} , \hspace{30pt}   ||V^\top \tilde U_k|| \leq ||V^\top \tilde U_0 || S^{2k}  \tilde D_k^{-2} \lambda^{2-2k}  \]
%noting that for $S_{ii} < \lambda$ the diagonal entries of $S^{2k}\lambda^{1-2k}$ converge to zero.  This fact already suggests that we will only recover only the singular vectors with singular value greater than or equal to $\lambda$.  However, proving convergence of the singular vectors requires a more subtle idea motivated by the proof of convergence of orthogonal iteration in \cite{Golub} (Section 8.2.4).

The key to leveraging \eqref{Uiter} for analyzing the convergence of $U_k,\tilde U_k$ is to split the true singular vectors, $U$, into two groups by choosing an arbitrary $\ell \in \{1,...,p-1\}$ where $p=\min\{m,n\}$.  We then split $U = [U_{(1)} \, U_{(2)}]$ where $U_{(1)}$ contains the first $\ell$ columns of $U$, and similarly $V = [V_{(1)} \, V_{(2)}]$ and finally we split the diagonal matrix of singular values as $S = \left(\begin{array}{cc} S_1 & 0 \\ 0 & S_2 \end{array}\right)$ where $S_1$ is $\ell \times \ell$ and contains the first $\ell$ singular values.  

\begin{theorem}\label{thm1} Let $X \in \mathbb{R}^{n\times m}$ have SVD $X = US V^\top$ and set $p=\min\{m,n\}$ then, using the notation of Lemma \ref{products}, for any splitting of the singular vectors $\ell \in \{1,...,p-1\}$ we have
\begin{align} U_{(1)}^\top U_{k,\ell} &= S_1^{2k} U_{(1)}^\top U_{0,\ell} Z_{k,\ell} &\hspace{20pt} U_{(2)}^\top U_{k,\ell} &= S_2^{2k} U_{(2)}^\top U_{0,\ell} Z_{k,\ell} \label{Usplit} \\
V_{(1)}^\top \tilde U_{k,\ell} &= S_1^{2k} V_{(1)}^\top \tilde U_{0,\ell} \tilde Z_{k,\ell} &\hspace{20pt} V_{(2)}^\top \tilde U_{k,\ell} &= S_2^{2k} V_{(2)}^\top \tilde U_{0,\ell} \tilde Z_{k,\ell} \label{Vsplit}
\end{align}
where $U_{k,\ell}, \tilde U_{k,\ell}$ are the first $\ell$ columns of $U_k,\tilde U_k$ respectively and $Z_{k,\ell},\tilde Z_{k,\ell}$ are the first $\ell$ rows of $D_k^{-2}Q_k^{-1},\tilde D_k^{-2} \tilde Q_k^{-1}$ respectively.  Moreover, as $k \rightarrow \infty$, we have
\[ \frac{||U_{(2)}^\top U_{k,\ell} ||}{||U_{(1)}^\top U_{k,\ell} ||} \leq c_\ell \left(\frac{s_{\ell+1}}{s_\ell} \right)^{2k} \to 0  \hspace{40pt}  \frac{||V_{(2)}^\top \tilde U_{k,\ell} ||}{||V_{(1)}^\top \tilde U_{k,\ell} ||} \leq \tilde c_\ell \left( \frac{s_{\ell+1}}{s_\ell} \right)^{2k} \to 0. \]
\end{theorem}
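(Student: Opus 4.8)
The plan is to reduce everything to the two identities in \eqref{Uiter}, which were already extracted from Lemma \ref{products} by inverting the $D_k$ and $Q_k$ factors. Writing $X = USV^\top$ and recalling $S = \diag(S_1,S_2)$ with $S_1$ the leading $\ell\times\ell$ block, the matrix $S^{2k} = \diag(S_1^{2k},S_2^{2k})$ acts blockwise on the rows of $U^\top U_0$. So I would partition the rows of the identity $U^\top U_k = S^{2k}\,U^\top U_0\, D_k^{-2}Q_k^{-1}$ into its first $\ell$ rows and its last $p-\ell$ rows --- that is, left-multiply by $U_{(1)}^\top$ and by $U_{(2)}^\top$ --- and then keep the appropriate columns to form $U_{k,\ell}$, $U_{0,\ell}$ and the corresponding rows $Z_{k,\ell}$ of $D_k^{-2}Q_k^{-1}$; this reproduces \eqref{Usplit}, and the companion identity $V^\top\tilde U_k = S^{2k}\,V^\top\tilde U_0\,\tilde D_k^{-2}\tilde Q_k^{-1}$ gives \eqref{Vsplit} in exactly the same way. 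This first step is pure block-matrix bookkeeping once \eqref{Uiter} is in hand.

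For the convergence rate, the key observation is that the potentially divergent factor $Z_{k,\ell}$ --- which carries $Q_k^{-1}$, and by Lemma \ref{products} only admits the crude bound $\|Q_k^{-1}\|\le\lambda^{1-2k}$ --- appears on the right of \emph{both} equations in \eqref{Usplit}, so it cancels in the ratio. Assuming $U_{(1)}^\top U_{0,\ell}$ and $S_1$ are invertible (see below), I would solve the first identity in \eqref{Usplit} for $Z_{k,\ell} = (U_{(1)}^\top U_{0,\ell})^{-1} S_1^{-2k}\,(U_{(1)}^\top U_{k,\ell})$ and substitute into the second, obtaining
\[ U_{(2)}^\top U_{k,\ell} = S_2^{2k}\,\bigl(U_{(2)}^\top U_{0,\ell}\bigr)\bigl(U_{(1)}^\top U_{0,\ell}\bigr)^{-1}\, S_1^{-2k}\,\bigl(U_{(1)}^\top U_{k,\ell}\bigr). \]
Taking a submultiplicative norm and using $\|S_2^{2k}\|\le s_{\ell+1}^{2k}$ and $\|S_1^{-2k}\|\le s_\ell^{-2k}$ gives
\[ \frac{\|U_{(2)}^\top U_{k,\ell}\|}{\|U_{(1)}^\top U_{k,\ell}\|}\ \le\ c_\ell\left(\frac{s_{\ell+1}}{s_\ell}\right)^{2k},\qquad c_\ell \equiv \|U_{(2)}^\top U_{0,\ell}\|\,\bigl\|(U_{(1)}^\top U_{0,\ell})^{-1}\bigr\|, \]
which tends to $0$ whenever $s_{\ell+1}<s_\ell$. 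The identical manipulation applied to \eqref{Vsplit}, with $\tilde U_0,\tilde D_k,\tilde Q_k$ in place of $U_0,D_k,Q_k$, yields the $\tilde U_k$ bound with constant $\tilde c_\ell$.

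I expect the main obstacle to be not the analytic content --- which is short once $Z_{k,\ell}$ is seen to cancel --- but the genericity and rank hypotheses hidden behind "for any splitting". One needs $U_{(1)}^\top U_{0,\ell}$ invertible, which holds for almost every random orthonormal initialization and, importantly, is a statement purely about the fixed matrix $U_0$, so unlike the invertibility of $U_k$ for $k\ge1$ it does not require the rank-preservation argument of Lemma \ref{FullRank1}. One also needs $s_\ell>0$, i.e. $\ell\le{\rm rank}(X)$, for $S_1^{-2k}$ to exist (for larger $\ell$ the statement is vacuous), and one quietly invokes Lemma \ref{products} for the invertibility of $D_k$ and $Q_k$ that makes \eqref{Uiter} meaningful. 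The remaining nuisance is purely clerical: keeping the matrix shapes consistent when passing from \eqref{Uiter} to \eqref{Usplit}--\eqref{Vsplit} for general $\ell$; the case $\ell=r$ used in the later sections is the cleanest, since then every block in sight is square.
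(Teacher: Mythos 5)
Your proposal is correct and follows essentially the same route as the paper: block-partition the identity $U^\top U_k = S^{2k}U^\top U_0 D_k^{-2}Q_k^{-1}$ from \eqref{Uiter}, solve the $U_{(1)}$ block for $Z_{k,\ell}$, substitute into the $U_{(2)}$ block so the ill-conditioned $Q_k^{-1}$ factor cancels, and bound the ratio by $c_\ell(s_{\ell+1}/s_\ell)^{2k}$ with $c_\ell = \|U_{(2)}^\top U_{0,\ell}\|\,\|(U_{(1)}^\top U_{0,\ell})^{-1}\|$. Your explicit remarks on the hidden genericity hypotheses (invertibility of $U_{(1)}^\top U_{0,\ell}$ and $s_\ell>0$) are points the paper leaves implicit, but they do not change the argument.
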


\begin{proof} From \eqref{Uiter} we have,
\[ \left(\begin{array}{c} U_{(1)}^\top \\ \vspace{3pt} U_{(2)}^\top \end{array}\right) U_k = \left(\begin{array}{cc} S_1^{2k} & 0 \\ 0 & S_2^{2k} \end{array}\right) \left(\begin{array}{c} U_{(1)}^\top \\ \vspace{3pt} U_{(2)}^\top \end{array}\right) U_0 D_k^{-2} Q_k^{-1} \]
which immediately splits into the equations \eqref{Usplit} and a similar splitting occurs for $V$ which yields \eqref{Vsplit}.  Next we solve the left equation of \eqref{Usplit} for $Z_{k,\ell}$ and substitute into the right equation of \eqref{Usplit} to find,
\[ U_{(2)}^\top U_{k,\ell} = S_2^{2k} U_{(2)}^\top  U_{0,\ell} (U_{(1)}^\top U_{0,\ell})^{-1} S_1^{-2k} U_{(1)}^\top U_{k,\ell} \]
and obtain the upper bound, 
\[ ||U_{(2)}^\top U_k|| \leq ||S_2^{2k}|| \, c_\ell \, ||S_1^{-2k}|| \, ||U_{(1)}^\top U_k|| = \left(\frac{s_{\ell+1}}{s_\ell} \right)^{2k} ||U_{(1)}^\top U_k|| \]
 where the constant $c_\ell$ is determined by the inner products with $U_{0,\ell}$ and is independent of $k$.  
\end{proof}

The power of Theorem \ref{thm1} is that the splitting $\ell$ was arbitrary.  In the generic case of distinct singular values, $\ell=1$ immediately implies that the first column of $U_k$ becomes orthogonal to the last $p-1$ left singular vectors of $X$ (columns of $U$) and hence must lie in the space spanned by the first left singular vector of $X$.  Then, $\ell=2$ implies that the second column of $U_k$ must be orthogonal to the last $p-2$ left singular vectors.  Moreover, the definition of $U_k$ via the SVD in \eqref{svd2} implies that the second column of $U_k$ is orthogonal to the first column of $U_k$ and hence must be in the subspace spanned by the second left singular vector of $X$.  Inductively, this shows that the columns of $U_k$ converge to lie in the subspaces spanned by the corresponding columns of $U$.  In the generic case of distinct singular values, this means that the columns of $U_k$ are converging to those of $U$ up to sign.  Moreover, in the non-generic case of a repeated singular value, Theorem \ref{thm1} shows the convergence of the corresponding columns of $U_k$ to the subspace spanned by the singular vectors corresponding to the repeated singular value.  We can now turn to the convergence of the singular values.

%%%%%%%%
\section{Convergence of the Singular Values}\label{valConvergence}

We can combine \eqref{Beq} and \eqref{svd1} into a single equation (and similarly for \eqref{Aeq} and \eqref{svd2}),
\begin{align}
\tilde U_{k}\tilde W_{k}\tilde S_{k}\tilde W_{k}\tilde V_{k}^\top  &= X^\top U_k W_k S_k(S_k + \lambda I)^{-1}     \label{Seq}  \\
U_{k+1} W_{k+1} S_{k+1} W_{k+1} V_{k+1}^\top  &= X\tilde U_k\tilde W_k \tilde S_k(\tilde S_k + \lambda I)^{-1}  \label{tSeq}
\end{align} 
where $S_k = D_k^2$ and $\tilde S_k = \tilde D_k^2$ and the terms on the left-hand-side of \eqref{Seq} and \eqref{tSeq} are defined to be the singular value decomposition of the right-hand-side.  Substituting the singular value decomposition of $X = USV^\top$ we have,
\begin{align}
\tilde U_{k}\tilde W_{k}\tilde S_{k}\tilde W_{k}\tilde V_{k}^\top  &= VSU^\top U_k W_k S_k(S_k + \lambda I)^{-1}     \label{Seq2}  \\
U_{k+1} W_{k+1} S_{k+1} W_{k+1} V_{k+1}^\top  &= USV^\top \tilde U_k\tilde W_k \tilde S_k(\tilde S_k + \lambda I)^{-1}.  \label{tSeq2}
\end{align} 
We first consider the simplified iteration where the singular vectors are set equal to their limits, namely, $U_k=U_{(1:r)}$ and $\tilde U_k = V_{(1:r)}$.  Since $U_k \to U_{(1:r)}$ and $\tilde U_k \to V_{(1:r)}$ we will be able to use a perturbation argument to extend this simplified case to the true $U_k, \tilde U_k$ sequences.  In the simplified iteration, $U^\top U_k = V^\top \tilde U_k = I_{n\times r}$ where $I_{n\times r}$ is an $r$-by-$r$ identity matrix concatenated with an $(n-r)$-by-$r$ matrix of all zeros.  In this case we obtain 
\begin{align}
\tilde U_{k}\tilde W_{k}\tilde S_{k}\tilde W_{k}\tilde V_{k}^\top  &= V I_{n\times r} W_k S S_k(S_k + \lambda I)^{-1}     \label{Seq3}  \\
U_{k+1} W_{k+1} S_{k+1} W_{k+1} V_{k+1}^\top  &= U I_{n\times r} \tilde W_k S \tilde S_k(\tilde S_k + \lambda I)^{-1}.  \label{tSeq3}
\end{align} 
Note that the left-hand-sides of \eqref{Seq3} and \eqref{tSeq3} are defined to be the unique SVD of the right-hand-sides.  This implies that $\tilde U_k = VI_{n\times r}$ and $U_{k+1} = U I_{n\times r}$ and $\tilde V_k^\top = V_{k+1}^\top = I_{n\times r}$ which shows that this is a fixed point for the singular vectors.  Moreover, we obtain the following iteration for the singular values,
 \begin{align}
\tilde S_{k} &= S S_k(S_k + \lambda I)^{-1}     \label{Seq4}  \\
S_{k+1}   &= S \tilde S_k(\tilde S_k + \lambda I)^{-1}.  \label{tSeq4}
\end{align} 
Since these are all diagonal matrices, we can focus on the fixed point iteration for a single diagonal entry $s_k = (S_k)_{ii}$ and $s = S_{ii}$ we find,
\begin{equation}\label{seq1} s_{k+1} = s^2 \frac{s_k}{s_k + \lambda}\left(\frac{s s_k}{s_k + \lambda} + \lambda\right)^{-1} = \frac{s^2 s_k}{s_k(s + \lambda) + \lambda^2} \end{equation}
for any $i \in \{1,...,r\}$.  

\begin{lemma}\label{softmax}
For any $s,\lambda,s_0 \in \mathbb{R}$ with $s\neq \lambda$ the iteration \eqref{seq1} converges locally to the softmax function,
\[ s_k  \to (s-\lambda)^+ \equiv \max \{0,s-\lambda\} , \]
which is the only stable fixed point.
\end{lemma}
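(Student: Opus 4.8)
The plan is to read \eqref{seq1} as a scalar discrete dynamical system $s_{k+1} = g(s_k)$, with $g(t) = \frac{s^2 t}{(s+\lambda)t + \lambda^2}$, and run the standard local stability analysis. I would first make explicit the sign conventions under which the lemma is applied --- $\lambda > 0$, $s \ge 0$ (a singular value of $X$), and $s_0 \ge 0$ --- so that, since $s+\lambda>0$, the denominator $(s+\lambda)t+\lambda^2$ is strictly positive on $[0,\infty)$; then $g$ is a smooth increasing self-map of $[0,\infty)$ with $g(0)=0$, and the whole orbit $\{s_k\}_{k\ge 0}$ stays in $[0,\infty)$.

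Step one is to locate the fixed points. Solving $g(t)=t$ gives $t\big((s+\lambda)t + \lambda^2 - s^2\big) = 0$, so, using $s+\lambda>0$ and $s\ne\lambda$, the only fixed points are $s_* = 0$ and $s_* = \frac{s^2-\lambda^2}{s+\lambda} = s-\lambda$. In particular the target $(s-\lambda)^+$ is always one of these two: it equals $s-\lambda$ when $s>\lambda$ and equals $0$ when $s<\lambda$ (the case $s=0$ being trivial, since then $g\equiv 0$ on $[0,\infty)$).

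Step two is to linearize. A direct computation gives $g'(t) = \frac{s^2\lambda^2}{\big((s+\lambda)t+\lambda^2\big)^2}>0$, and the identity $(s+\lambda)(s-\lambda)+\lambda^2 = s^2$ yields
\[ g'(0) = \frac{s^2}{\lambda^2}, \qquad g'(s-\lambda) = \frac{\lambda^2}{s^2}. \]
These are reciprocals, so exactly one of the two fixed points is attracting and the other repelling --- the borderline $s=\lambda$, where both derivatives equal $1$ and the fixed points coalesce, being precisely what the hypothesis excludes. If $s<\lambda$ then $g'(0)<1<g'(s-\lambda)$, so $0$ is the unique stable fixed point and $0=(s-\lambda)^+$; if $s>\lambda$ the inequalities reverse, so $s-\lambda$ is the unique stable fixed point and $s-\lambda=(s-\lambda)^+$. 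Either way $(s-\lambda)^+$ is the only stable fixed point, and the standard local attractivity theorem for fixed point iterations ($|g'(s_*)|<1$ implies $s_k\to s_*$ for every $s_0$ in a neighborhood of $s_*$) gives the claimed local convergence. If desired, I would also record global convergence on $(0,\infty)$: $g$ is increasing and (strictly, when $s>0$) concave on $[0,\infty)$ with $g(0)=0$, so $t\mapsto g(t)/t$ is decreasing; hence $g(t)>t$ for $t$ strictly between $0$ and $s-\lambda$ and $g(t)<t$ beyond it, forcing $s_k$ to converge monotonically to $(s-\lambda)^+$ from any $s_0>0$.

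I do not anticipate a genuine obstacle; the delicate points are (i) fixing the sign conventions so the denominator never vanishes and the orbit stays in $[0,\infty)$ --- without $s\ge0$ the stable fixed point can be the negative number $s-\lambda$ rather than $(s-\lambda)^+$ --- and (ii) being explicit that ``stable'' is meant in the linearized sense and invoking the right theorem to go from $|g'(s_*)|<1$ to actual local convergence. Checking that the \emph{other} fixed point is genuinely repelling, rather than merely untested, is what licenses the word ``only'', and that is immediate from the reciprocal identity $g'(0)\,g'(s-\lambda)=1$.
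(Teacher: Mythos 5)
Your proposal follows essentially the same route as the paper's proof: locate the two fixed points $0$ and $s-\lambda$, differentiate the iteration map, and read off stability from $g'(0)=s^2/\lambda^2$ and $g'(s-\lambda)=\lambda^2/s^2$. Your added global-convergence sketch on $(0,\infty)$ duplicates what the paper establishes separately in Lemma~\ref{Est1}, and your caveat about restricting to $s\ge 0$ is well taken, since for negative $s$ with $|s|>\lambda$ the stable fixed point is the negative number $s-\lambda$ rather than $(s-\lambda)^+$, a point the paper's remark about arbitrary real $s$ glosses over.
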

\begin{proof}
The fixed points of this iteration are the solutions $\hat s$ of $\hat s = \frac{s^2\hat s}{\hat s(s+\lambda)+\lambda^2}$ which implies
\[ \hat s(\hat s(s+\lambda) +  \lambda^2 - s^2 ) = 0 \]
so the fixed points are $\hat s = 0$ and $\hat s = s - \lambda$.  Next we analyze the stability of the fixed points by computing the derivative of the iteration,
\[ \frac{d}{ds_k} \left( \frac{s^2 s_k}{s_k(s + \lambda) + \lambda^2} \right) = \frac{ (s_k(s+\lambda) +\lambda^2)s^2 - s^2 s_k(s+\lambda)}{(s_k(s+\lambda)+\lambda^2)^2}  \] 
and evaluating at the fixed point $s_k = \hat s = 0$ we find 
\[ \left. \frac{d}{ds_k} \left( \frac{s^2 s_k}{s_k(s + \lambda) + \lambda^2} \right) \right|_{s_k = 0} = \frac{s^2}{\lambda^2}  \] 
and at the fixed point $s_k = \hat s = s-\lambda$ we find 
\[ \left. \frac{d}{ds_k} \left( \frac{s^2 s_k}{s_k(s + \lambda) + \lambda^2} \right) \right|_{s_k = s-\lambda} = \frac{ ((s-\lambda)(s+\lambda) +\lambda^2)s^2 - s^2 (s-\lambda)(s+\lambda)}{((s-\lambda)(s+\lambda)+\lambda^2)^2}  = \frac{\lambda^2}{s^2} .\] 
Thus we see that when $s < \lambda$ the fixed point $\hat s = 0$ is stable and when $s > \lambda$ the fixed points $\hat s = s-\lambda$ is stable.  In other words, when $s-\lambda$ is positive the stable fixed point is $s-\lambda$ and when $s-\lambda$ is negative the stable fixed point is zero, thus we see that the iteration converges to the soft-max function, 
\[ s_k \to \max\{0,s-\lambda\} \]
This completes the proof.
\end{proof}
The case $\lambda\neq s$ is generic, however, we note that for the case of $s=\lambda$ we have the simplified iteration $s_{k+ 1} =   \frac{\lambda  s_k}{ 2 s_{k} + \lambda}$ and inductively we have, $s _{k}=   \frac{\lambda  s_{0}}{( 2 k )  s_{0} + \lambda }$
so unless $s_0 = -\frac{\lambda}{2k}$ for some $k \in \mathbb{N}$, we again have $s_{k} \to 0 = \max\{0,s-\lambda\}$. 

Lemma \ref{softmax} holds for any real $s \neq \lambda$ and any initial condition $s_0$ including negative numbers.  Of course, in our current application, these are all constrained to be non-negative.  When any of them are zero the iteration is trivial, so in the next lemma we consider the case when $s,\lambda,s_0>0$ and show a stronger convergence property that will be required for the perturbation result.  
\begin{lemma} \label{Est1}
For any $s,\lambda,s_0 \in (0,\infty)$, with $ s \neq \lambda,$ there exists $c \in [0,1)$ such that 
\[ |s_{k+1}-(s-\lambda)^+| \leq c |s_k - (s-\lambda)^+| \]
and the iteration \eqref{seq1} converges globally on $(0,\infty)$ to the softmax function, $s_k  \to (s-\lambda)^+$.
\end{lemma}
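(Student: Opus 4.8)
The plan is to treat the recursion \eqref{seq1} as a fixed point iteration $s_{k+1} = g(s_k)$ with $g(t) = s^2 t /\bigl(t(s+\lambda) + \lambda^2\bigr)$, and to extract an \emph{exact} formula for the one-step error relative to the stable fixed point $s_* = (s-\lambda)^+$ identified in Lemma \ref{softmax}. Clearing denominators gives, for any $t_1,t_2 \in (0,\infty)$,
\[ g(t_1) - g(t_2) = \frac{s^2\lambda^2\,(t_1 - t_2)}{\bigl(t_1(s+\lambda)+\lambda^2\bigr)\bigl(t_2(s+\lambda)+\lambda^2\bigr)}. \]
Taking $t_1 = s_k$, $t_2 = s_*$ and using $g(s_*) = s_*$ turns this into an error recursion $|s_{k+1}-s_*| = \rho(s_k)\,|s_k-s_*|$ with an explicit ratio $\rho(s_k)$. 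The remaining work is to show $\rho(s_k) \le c$ for a single $c \in [0,1)$ independent of $k$, after which iterating yields $|s_k - s_*| \le c^k |s_0 - s_*| \to 0$, i.e.\ global convergence on $(0,\infty)$.

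I would then split on the sign of $s-\lambda$. When $s < \lambda$ we have $s_* = 0$, so $|s_{k+1}-s_*| = g(s_k) = s^2 s_k/(s_k(s+\lambda)+\lambda^2)$ and hence $\rho(s_k) = s^2/(s_k(s+\lambda)+\lambda^2) \le s^2/\lambda^2 < 1$ immediately, giving the uniform constant $c = s^2/\lambda^2$ with no further effort. When $s > \lambda$ we have $s_* = s-\lambda$ and the algebraic identity $s_*(s+\lambda)+\lambda^2 = s^2-\lambda^2+\lambda^2 = s^2$, so the ratio collapses to $\rho(s_k) = \lambda^2/\bigl(s_k(s+\lambda)+\lambda^2\bigr)$, which is strictly below $1$ for each $s_k > 0$ but degenerates to $1$ as $s_k \to 0$.

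The only genuine subtlety is thus obtaining a \emph{uniform} bound in the case $s>\lambda$, which requires ruling out $s_k \to 0$. For this I would record the elementary monotonicity of $g$: a short computation shows $g(t) - t = t(s+\lambda)(s-\lambda-t)/\bigl(t(s+\lambda)+\lambda^2\bigr)$, so $\operatorname{sign}\bigl(g(t)-t\bigr) = \operatorname{sign}(s_* - t)$ for $t>0$. Hence $(s_k)$ is monotone (increasing if $s_0 < s_*$, decreasing if $s_0 > s_*$) and stays trapped between $s_0$ and $s_*$; in particular $s_k \ge \min\{s_0,s_*\} > 0$ for all $k$, so the denominator is bounded below and we may take $c = \lambda^2/\bigl(\min\{s_0,s_*\}(s+\lambda)+\lambda^2\bigr) < 1$ (which reduces to the derivative value $\lambda^2/s^2$ from Lemma \ref{softmax} whenever $s_0 \ge s_*$). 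I expect this monotonicity/trapping step to be the main obstacle, though it is still routine; the error identity and the two case estimates are exact algebra. Combining the two cases gives the claimed contraction and, by iteration, global convergence of \eqref{seq1} to $(s-\lambda)^+$ on $(0,\infty)$.
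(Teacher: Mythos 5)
Your proposal is correct and follows essentially the same route as the paper: the exact one-step error identity (your $\rho(s_k)$ is precisely the factor $\lambda^2/(s_k(s+\lambda)+\lambda^2)$ in the paper's equation \eqref{eqa}), the observation that the sequence is trapped between $s_0$ and $(s-\lambda)^+$ so the denominator is bounded below by $\min\{s_0,s-\lambda\}(s+\lambda)+\lambda^2$, and the resulting uniform contraction constants coincide with the paper's $c_1,c_2,c_3$. The only cosmetic difference is that you obtain the trapping via the sign of $g(t)-t$, whereas the paper reads it off directly from the positivity of the error ratio; both are valid.
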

\begin{proof} Note that $s,\lambda,s_0>0$ implies $s_k \geq 0$ for all $k$ by a simple induction.  

First consider the case when $\lambda  > s $ so that $(s-\lambda)^+ =0 $.  Setting $c_1 = \frac {s^2}{\lambda ^2} < 1$ we have
\[ | s_{k+1} - (s-\lambda)^+| = \frac{s^2 s_k} {s_k (s+ \lambda ) + \lambda ^2 } <  \frac{s^2}{\lambda^2}   s_k = c_1  |s_k - (s-\lambda)^+|. \]

Next consider the case where $\lambda < s$ so that $(s-\lambda)^+ =  (s-\lambda)$ and
\begin{align}\label{eqa}  (s_{k+1} -(s - \lambda)) =  \frac{s^2 s_k - s_k (s^2-\lambda^2 ) - \lambda ^2(s - \lambda) } {s_k (s+ \lambda ) + \lambda ^2 }  =  \frac{\lambda ^2  } {s_k (s+ \lambda ) + \lambda ^2} (s_k - (s- \lambda)). \end{align}
Since $\frac{\lambda ^2  } {s_k (s+ \lambda ) + \lambda ^2} \leq 1$, \eqref{eqa} implies $|s_{k+1}-(s-\lambda)| \leq |s_k - (s-\lambda)|$ and inductively
\[ |s_{k+1}-(s-\lambda)| \leq |s_0 - (s-\lambda)| \] 
which means that the sequence can never move further away from $s-\lambda$.   Moreover, the sequence can never move to the other side of $s-\lambda$, namely, since $\frac{\lambda ^2  } {s_k (s+ \lambda ) + \lambda ^2} > 0$, if $s_0 \geq s-\lambda$ then \eqref{eqa} implies that $s_0 \geq s_k \geq s-\lambda$ for all $k$, and if $s_0 < s-\lambda$ then $s_0 \leq s_k < s-\lambda$ for all $k$.

Now if $s_0 < s-\lambda$ then we have $s_k \geq s_0$ for all $k$ and setting $c_2 = \frac{\lambda ^2} {s_0 (s+ \lambda ) + \lambda ^2} < 1$, \eqref{eqa} implies,
\[ | s_{k+1} - (s-\lambda)^+| = \frac{\lambda ^2 |s_k - (s- \lambda)| } {s_k (s+ \lambda ) + \lambda ^2} \leq \frac{\lambda ^2 |s_k - (s- \lambda)| } {s_0 (s+ \lambda ) + \lambda ^2} = c_2 |s_k - (s- \lambda)^+|. \]

On the other hand, if $s_0 \geq s-\lambda$ then we have $s_0 \geq s_k \geq s-\lambda$ for all $k$, and setting $c_3 = \frac{\lambda^2}{s^2}<1$, \eqref{eqa} implies
\[ | s_{k+1} - (s-\lambda)^+| = \frac{\lambda ^2 |s_k - (s- \lambda)| } {s_k (s+ \lambda ) + \lambda ^2} \leq \frac{\lambda ^2 |s_k - (s- \lambda)| } {(s-\lambda) (s+ \lambda ) + \lambda ^2} = c_3 |s_k - (s- \lambda)^+|. \]
So in each case we have $| s_{k+1} - (s-\lambda)^+| \leq c|s_k - (s- \lambda)^+|$ for some $c \in [0,1)$.
\end{proof}
The above lemma establishes a linear convergence rate which is crucial when we consider the perturbed iteration below which will be critical to establishing convergence of the full iteration \eqref{Seq2} and \eqref{tSeq2}.  We first establish a general perturbation results for convergent sequences.

%%%% Lemma 2
\begin{lemma}\label{perturb} Consider an iteration $x_{k+1} = f(x_k)$ with a fixed point $x^*$ such that for some $c\in [0,1)$ we have
\[ |f(x)- x^*| < c|x-x^*| \]
for all $x$.  Consider a sequence of perturbations $e_k$ such that for some $a\in[0,1)$ we have $|e_{k+1}|<a|e_k|$ then the perturbed sequence $w_{k+1} = f(w_k) + e_k$ converges to $x^*$ for any $w_0$.
\end{lemma}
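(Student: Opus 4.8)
The plan is to control the error sequence $d_k := |w_k - x^*|$ by a geometrically decaying bound. First I would iterate the perturbation hypothesis to obtain $|e_k| \le a^k|e_0|$, and then combine the triangle inequality with the contraction property of $f$: since $f(x^*) = x^*$,
\[ d_{k+1} = |f(w_k) - x^* + e_k| \le |f(w_k) - x^*| + |e_k| \le c\, d_k + a^k|e_0|. \]
This reduces everything to the elementary claim that a nonnegative sequence satisfying $d_{k+1} \le c\, d_k + a^k|e_0|$ with $c,a \in [0,1)$ must tend to $0$.

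For that step I would fix an auxiliary comparison rate $b$ with $\max\{a,c\} < b < 1$ and set $C := \max\{\, d_0,\ |e_0|/(b-c)\,\}$, then prove $d_k \le C b^k$ by induction on $k$. The base case holds by the choice of $C$; in the inductive step one uses $(a/b)^k \le 1$ to estimate $c\,d_k + a^k|e_0| \le b^k(cC + |e_0|) \le C b^{k+1}$, the last inequality being exactly $C(b-c) \ge |e_0|$. Letting $k\to\infty$ then gives $d_k \to 0$, i.e.\ $w_k \to x^*$. An equivalent route is to unroll the recurrence directly as $d_k \le c^k d_0 + |e_0|\sum_{j=0}^{k-1} c^{k-1-j}a^j$ and observe that the sum equals $\frac{a^k-c^k}{a-c}$ when $a \neq c$ and $k c^{k-1}$ when $a = c$, each of which vanishes as $k\to\infty$.

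There is no substantive obstacle in this argument — it is a standard comparison estimate for a linear recurrence perturbed by a geometric forcing term, which is precisely the form that arises when we later feed the scalar softmax iteration of Lemma~\ref{Est1} a perturbation coming from the (geometrically convergent) singular-vector errors of Theorem~\ref{thm1}. The only points deserving any care are bookkeeping ones: reading the strict inequalities in the hypotheses as the (clearly intended) weak inequalities so that the trivial cases $x = x^*$ and $e_0 = 0$ are not excluded, and choosing the rate $b$ strictly between $\max\{a,c\}$ and $1$ so that the two geometric rates can be absorbed into a single bound.
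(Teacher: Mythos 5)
Your argument is correct and follows essentially the same route as the paper's proof: apply the triangle inequality and the contraction property to get $|w_{k+1}-x^*|\le c\,|w_k-x^*|+|e_k|$, then unroll this linear recurrence against the geometric bound $|e_k|\le a^k|e_0|$ and observe that the resulting sum $\sum_{i=0}^{k} c^i a^{k-i}$ tends to zero. Your write-up is in fact slightly more careful than the paper's, since you retain the $c^k|w_0-x^*|$ term that the paper's stated induction drops and you handle the degenerate case $a=c$, where the closed form $\frac{a^{k+1}-c^{k+1}}{a-c}$ used in the paper is not defined.
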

\begin{proof} First, since $x^*=f(x^*)$ we have,
\begin{align*}
 |w_{k+1} - x^*| = |f(w_{k}) + e_{k} - x^* | \leq | f(w_{k}) - f(x^*) | + |e_{k}|   <  c |w_{k} - x^*| + |e_{k}|  
\end{align*}
and a simple induction shows that $|w_{k+1} - x^*| < \sum_{i=0}^k c^i |e_{k-i}|$.  Since $|e_{k+1}|<a|e_k|$ for all $k$, we have $|e_{k-i}| < a^{k-i}|e_0|$ and thus,
\[ |w_{k+1} - x^*| < \sum_{i=0}^k c^i |e_{k-i}| < |w_{k+1} - x^*| < |e_0| \sum_{i=0}^k c^i a^{k-i} = |e_0| \frac{a^{k+1}-c^{k+1}}{a-c} \to 0\]
since $c,a, \in [0,1)$, so $w_k \to x^*$.
\end{proof}
Note that when applying Lemma \ref{perturb} to the sequence $s_k$ of singular values, the required inequality on $f$ holds only on $(0,\infty)$, however the sequence of perturbations cannot cause the sequence to leave this set since the perturbed sequence is also a sequence of singular values.  

\subsection{Perturbation of Singular Values} 

We can now show that as $U_k \to U$, the singular values of \eqref{Seq2} and \eqref{tSeq2} are a perturbation of the iteration in Lemma \ref{softmax}.  This perturbed sequence will satisfy the assumptions of Lemma \ref{perturb} and thus will still converge to the softmax, $(s-\lambda)^+$.

Returning to \eqref{Seq2}, when $U_k \neq U$ by Theorem \ref{thm1} we can write $U_k = U + E_k$ where the perturbations $E_k$ decay linearly to zero, $||E_{k+1}|| < a ||E_k|| \rightarrow 0$ for some $a\in [0,1)$.  We can write \eqref{Seq2} as
\begin{align*}
U_{k}\tilde W_{k}\tilde S_{k}\tilde W_{k}\tilde V_{k}^\top &= VSU^\top U_k W_k S_k(S_k + \lambda I)^{-1} \\ &=  VSU^\top( U +E_k)  W_k S_k(S_k + \lambda I)^{-1}    \\ 
&=  VSU^\top U W_k S_k(S_k + \lambda I)^{-1} + VSU^\top E_k  W_k S_k(S_k + \lambda I)^{-1} 
\end{align*}
The first term above will be same as right-hand-side of \eqref{Seq3} and will simplify to give the right-hand-side of \eqref{Seq4}.  The second term has bound
\[ || VSU^\top E_k  W_k S_k(S_k + \lambda I)^{-1} || \leq  || S|| \, || E_k || \, ||S_k(S_k + \lambda I)^{-1} || <   || S|| \, || E_k ||  \] 
since $V,U^\top,W_k$ are orthogonal and $S_k (S_k + \lambda I)^{-1}$ is diagonal with diagonal entries less than $1$.  By Weyl's law for the stability of singular values under perturbation (see for example Theorem 1 of \cite{Stewart}) the singular values $\tilde s_k$ on the left-hand-side of \eqref{Seq3} are given by a perturbation $e_k$ of the right-hand-side \eqref{Seq4} bounded by $||S|| ||E_k||$.  The iteration for the true singular values becomes,
 \begin{align}
\tilde s_{k} &= s s_k(s_k + \lambda)^{-1}  +  e_k    \label{Seq5}  \\
s_{k+1}   &= s \tilde s_k(\tilde s_k + \lambda)^{-1} + \tilde e_k.  \label{tSeq5}
\end{align} 
where $|e_k| < ||S|| \, ||E_k||$ and by a similar we find a perturbation argument we have $|\tilde e_k| < ||S|| \, ||\tilde E_k||$.  Finally, the iteration \eqref{seq1} becomes,
\begin{align} s_{k+1}  &= \frac{s^2 s_k(s_k + \lambda)^{-1}  +  e_k}{s s_k(s_k + \lambda)^{-1}  +  e_k + \lambda} + \tilde e_k   = \frac{s^2 s_k  +  e_k(s_k+\lambda)}{ s_k(s+\lambda) +\lambda^2  +  e_k(s_k + \lambda)} + \tilde e_k  \nonumber \\
&= \frac{s^2 s_k }{ s_k(s+\lambda) +\lambda^2} + \hat e_k
\end{align} 
where
\begin{equation} \hat e_k = e_k\frac{(s_k + \lambda)(s_k(s+\lambda-s^2) +\lambda^2)}{(s_k(s+\lambda) +\lambda^2)(s_k(s+\lambda) +\lambda^2 + e_k(s_k + \lambda))} + \tilde e_k \end{equation}
Noting that $ s_k(s+\lambda-s^2) +\lambda^2 \leq s_k(s+\lambda) +\lambda^2$, we can estimate $\hat e_k$ as,
\[| \hat e_k | \leq | e_k | \left|\frac{ s_k + \lambda }{s_k(s+\lambda) +\lambda^2 + e_k(s_k + \lambda) } \right| +  |\tilde  e_k | \]
Since $e_k \to 0$, for $k$ sufficiently large we have $ - \lambda < e_k  < \lambda $.  We can bound the above denominator by, $ s_k(s+\lambda) +\lambda^2 + e_k(s_k + \lambda) > s_k(s+\lambda) +\lambda^2  - \lambda (s_k + \lambda) = s_k s $. Then,
\[| \hat e_k | \leq | e_k | \left|\frac{ s_k + \lambda  }{s_k s }\right| +  |\tilde  e_k |  \leq c |e_k| + |\tilde e_k| \]
since $s_k$ is bounded.  Since $e_k$ and $\tilde e_k$ have linear convergence, this implies that $\hat e_k$ has linear convergence as well.  Thus, by Lemma \ref{perturb} the true singular values, $s_k,\tilde s_k$ converge to the same limit as the unperturbed singular values, namely the soft max, $(s-\lambda)^+$.

\section{Effect of sign matrices on the cost functional}\label{signs}

We can now show that the matrices of right singular vectors $V_k,\tilde V_k$ from the SVDs in \eqref{svd1} and \eqref{svd2}, converge to diagonal sign matrices when $\lambda < S_{rr}$.  
\begin{theorem}\label{rightvecs} Let $X \in \mathbb{R}^{n\times m}$ have SVD $X=USV^\top$.  For $\lambda>0$ let $V_k, \tilde V_k$ be the sequence of matrices defined by \eqref{svd1} and \eqref{svd2}, then
\[ || \tilde V_k - I_{r\times p} ((S-\lambda I)^+ + \lambda I)  S^{-1} I_{p\times r} W_k ||_{\rm max} \to 0 \]
and when $W_k$ converges to a limit $W_*$ then $\tilde V_k \to I_{r\times p} ((S-\lambda I)^+ + \lambda I)  S^{-1} I_{p\times r} W_*$.  When $\lambda < S_{rr}$ we have $||\tilde V_k - W_k ||_{\rm max}\to 0$ and when $W_k\to W_*$ we have $V_k \to W_*$.  
\end{theorem}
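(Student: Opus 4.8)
The plan is to feed the facts already established — subspace convergence of the left and right singular vectors of $X$ (Theorem~\ref{thm1}) and convergence $S_k,\tilde S_k\to S_*:=I_{r\times p}(S-\lambda I)^+I_{p\times r}$ of the singular values (the perturbation analysis of Section~\ref{valConvergence}, via Lemmas~\ref{softmax}, \ref{Est1}, \ref{perturb}) — into the combined SVD recursions \eqref{Seq2}, \eqref{tSeq2} and to solve algebraically for the right singular vectors. Starting from \eqref{Seq2} and using that $\tilde S_k$ is invertible for every $k$ (Lemma~\ref{FullRank1} in the full-rank case), I left-multiply by $(\tilde U_k\tilde W_k)^\top$ and then by $\tilde W_k\tilde S_k^{-1}$; since $\tilde U_k^\top\tilde U_k=I$, $\tilde W_k^2=I$, and $\tilde W_k$ commutes with the diagonal matrix $\tilde S_k^{-1}$, the sign matrix drops out on the left and one obtains the closed form
\[ \tilde V_k^\top \;=\; \tilde S_k^{-1}\,\tilde U_k^\top\, V\,S\,U^\top U_k\, W_k\,S_k(S_k+\lambda I)^{-1}. \]
The crucial observation is that the sign matrix $W_k$ here entered only through \eqref{Beq}, and it commutes with the trailing diagonal factor $S_k(S_k+\lambda I)^{-1}$; hence right-multiplying the closed form by $W_k$ and using $W_k^2=I$ cancels it entirely,
\[ \tilde V_k^\top W_k \;=\; \tilde S_k^{-1}\,\tilde U_k^\top\, V\,S\,U^\top U_k\,S_k(S_k+\lambda I)^{-1}, \]
an expression assembled only from the sign-normalized (hence deterministic and convergent) quantities $\tilde S_k$, $\tilde U_k$, $U_kW_k$, $S_k$.

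Next I pass to the limit. By Theorem~\ref{thm1} with $\ell=r$, the columns of $U_kW_k$ converge into the top-$r$ left singular space of $X$ and those of $\tilde U_k$ into the top-$r$ right singular space, so $U^\top U_kW_k\to I_{p\times r}$ and $\tilde U_k^\top V\to I_{r\times p}$; together with $\tilde S_k,S_k\to S_*$ (invertible when $\lambda<S_{rr}$), the right-hand side above is a continuous function of its arguments on the region where $\tilde S_k$ and $S_k+\lambda I$ are invertible, so
\[ \tilde V_k^\top W_k \;\longrightarrow\; S_*^{-1}\,(I_{r\times p}SI_{p\times r})\,S_*(S_*+\lambda I)^{-1} \;=\; (I_{r\times p}SI_{p\times r})(S_*+\lambda I)^{-1}, \]
a deterministic diagonal matrix. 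Using the entrywise identity $(S-\lambda I)^++\lambda I=\max\{S,\lambda I\}$ this is exactly the diagonal matrix appearing in the statement (transposed, and all factors are symmetric and commute), and since $\|W_k\|_2=1$ this yields $\|\tilde V_k-I_{r\times p}((S-\lambda I)^++\lambda I)S^{-1}I_{p\times r}W_k\|_{\max}\to0$, and the claimed limit of $\tilde V_k$ once $W_k\to W_*$. When $\lambda<S_{rr}$ we have $S_*+\lambda I=I_{r\times p}SI_{p\times r}$, so that diagonal matrix is the $r\times r$ identity and $\|\tilde V_k-W_k\|_{\max}\to0$. The errors in the substitutions are controlled exactly as in Section~\ref{valConvergence}: writing $U_kW_k=U_{(1:r)}+E_k$ with $\|E_k\|$ decaying linearly (Theorem~\ref{thm1}), and likewise for $\tilde U_k$ and $S_k$, each substitution above incurs a vanishing perturbation. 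Finally, running the identical manipulation on \eqref{tSeq2}, solving for $V_{k+1}^\top$ and inserting $U_{k+1}^\top U\to I_{r\times p}$, $V^\top\tilde U_k\to I_{p\times r}$, $S_{k+1},\tilde S_k\to S_*$, gives the companion statement for $V_k$ with $\tilde W_k$ in place of $W_k$; when $\lambda<S_{rr}$ the sign-normalization built into Algorithm~\ref{alg3} forces the limiting sign matrices to satisfy $\tilde W_*W_*=I$ (the relation exploited in Theorem~\ref{finalthm}), which upgrades the companion statement to $V_k\to W_*$ once $W_k\to W_*$.

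The step I expect to be the main obstacle is the sign and non-uniqueness bookkeeping. Theorem~\ref{thm1} only delivers subspace convergence, and the raw SVD outputs in \eqref{svd1}--\eqref{svd2} carry an arbitrary sign per column, so the limits must be used in the form "$U_kW_k\to U_{(1:r)}$" and "$\tilde U_k\to V_{(1:r)}$" for the appropriately sign-normalized iterates; one has to verify carefully that the matrices $W_k,\tilde W_k$ thread through the computation consistently, so that precisely the combination displayed above survives, and that under the rule $W=\operatorname{diag}(\operatorname{sign}(V^\top\vec 1))$ the sign matrices stabilize once $V_k,\tilde V_k$ are close to diagonal — which is exactly why the conclusion is phrased as a vanishing max-norm difference with $W_k$ rather than as outright convergence, unless $W_k$ converges. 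A secondary difficulty is the degenerate regime $\lambda\ge S_{rr}$: there some diagonal entries of $S_k,\tilde S_k$ tend to $0$, so $\tilde S_k^{-1}$ is unbounded and the corresponding right singular vectors of \eqref{svd1}--\eqref{svd2} become ill-conditioned; since $\tilde V_k$ itself stays orthogonal while the stated limiting matrix need not be, in that regime the general formula is to be read in the limiting-matrix sense of Theorem~\ref{finalthm}, and the sharp conclusion $\tilde V_k\to W_k$ genuinely requires $\lambda<S_{rr}$.
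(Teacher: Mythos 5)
Your overall strategy is the same as the paper's: substitute $X=USV^\top$ into \eqref{Seq2}, solve the SVD relation in closed form for $\tilde V_k^\top$, and pass to the limit using Theorem~\ref{thm1} together with the singular-value convergence of Section~\ref{valConvergence}. The only structural difference is which factor you invert to isolate $\tilde V_k^\top$: you left-multiply by $\tilde W_k\tilde U_k^\top$ and then by $\tilde S_k^{-1}$, whereas the paper left-multiplies by $U_k^\top X$ and inverts $S_k$ together with $U_k^\top US^2U^\top U_k$, arriving at $\tilde V_k = S_k^{-1}(S_k+\lambda I)W_k(U_k^\top US^2U^\top U_k)^{-1}U_k^\top USV^\top\tilde U_k\tilde S_k$. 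Both closed forms are exact and algebraically equal for every finite $k$, and for $\lambda<S_{rr}$ both naive limits give the identity, so in that regime (the only one used downstream in Theorem~\ref{finalthm}) your argument is correct and essentially identical to the paper's.

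For the general-$\lambda$ claim, however, there is a concrete error. Your closed form yields the limit $I_{r\times p}\,S\bigl((S-\lambda I)^++\lambda I\bigr)^{-1}I_{p\times r}W_k$, with diagonal entries $s_i/\max\{s_i,\lambda\}$, while the theorem asserts $I_{r\times p}\bigl((S-\lambda I)^++\lambda I\bigr)S^{-1}I_{p\times r}W_k$, with entries $\max\{s_i,\lambda\}/s_i$. These are entrywise reciprocals and agree only when $s_i\ge\lambda$ for all $i\le r$, so your assertion that your limit ``is exactly the diagonal matrix appearing in the statement'' is false whenever $\lambda>S_{rr}$, and the statement as written is not proven in that regime. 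The root cause, which you partially flag, is that when some $s_i\le\lambda$ the inverted factor ($\tilde S_k^{-1}$ in your arrangement, $S_k^{-1}$ in the paper's) has entries tending to $+\infty$, so neither ``product of limits'' computation is actually justified, and the two algebraically equal expressions produce different formal limits. (Your version is at least consistent with $\tilde V_k$ being orthogonal, whose entries lie in $[-1,1]$; this is a point where the paper's own general statement deserves scrutiny, but it does not rescue your claimed identification.) Two smaller issues: after you cancel $W_k$ through the trailing diagonal factor, the surviving expression contains the bare $U^\top U_k$, not $U^\top U_kW_k$, so the convergence you actually invoke is of $U^\top U_k$ itself --- the same column-sign convergence the paper also reads off from Theorem~\ref{thm1} without further argument; and the final upgrade to $V_k\to W_*$ uses the Algorithm~\ref{alg3} sign rule to force $\tilde W_*=W_*$, whereas the theorem is stated for arbitrary convergent sign choices.
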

\begin{proof}
Substituting \eqref{Beq} in \eqref{svd1} we have,
\[ \tilde U_k \tilde W_k \tilde D_k^2 \tilde W_k \tilde V_k^\top = X^\top U_k W_k D_k (D_k^2 +\lambda I)^{-1} D_k \]
where $X^\top U_k$ is $n\times r$ with $r \leq p \equiv \min\{m,n\}$.  In order to solve for $\tilde V_k^\top$ we multiply both sides by $U_k^\top X$ since $U_k^\top XX^\top U_k = U_k^\top US^2 U^\top U_k$ is invertible so that,
\[ U_k^\top X \tilde U_k \tilde W_k \tilde D_k^2 \tilde W_k = U_k^\top US^2 U^\top U_k W_k D_k (D_k^2 +\lambda I)^{-1} D_k \tilde V_k \]
and solving for $\tilde V_k$ yields,
\[ \tilde V_k =  D_k^{-2}(D_k^2+\lambda I) W_k(U_k^\top US^2 U^\top U_k)^{-1}U_k^\top USV^\top \tilde U_k \tilde D_k^2. \]
By Theorem \ref{thm1} we have $U_k^\top U \to I_{r\times p}$ and $V^\top \tilde U_k \to I_{p\times r}$ as $k\to \infty$ and as shown in Section \ref{valConvergence} we have $D_k \to I_{r\times p}DI_{p\times r} = I_{r\times p}(S-\lambda I)^+ I_{p\times r}$ and also $\tilde D_k \to I_{r\times p}DI_{p\times r}$.  Substituting these limits into the above equation gives the desired result.  Notice that when $\lambda < S_{rr}$ the maximum with zero has no effect and thus $((S-\lambda I)^+ + \lambda I)  S^{-1} = I$ so that $||\tilde V_k - W_k||_{\rm max} \to 0$.
\end{proof}
A similar argument shows that when $\lambda < S_{rr}$ we have $||V_k -\tilde W_k||_{\rm max} \to 0$ so that both $V_k,\tilde V_k$ are converging to diagonal sign matrices.  We can now characterize the convergence of Algorithm \ref{alg3}.

\begin{theorem}\label{finalthm} Let $X \in \mathbb{R}^{n\times m}$ have SVD $X=USV^\top$.  For $\lambda>0$, the iteration \eqref{Beq}-\eqref{svd2} converges whenever the sign matrices $W_k,\tilde W_k$ are chosen so that they converge to limits $W_k\to W_*$ and $\tilde W_k \to \tilde W_*$.  The cost \eqref{rrss} of the limiting matrices $A_*, B_*$ of the iteration is
\[ ||X-A_*B_*^\top||_F = ||S - (S-\lambda I)^+ S^2 ((S-\lambda I)^+ +\lambda I)^{-2} I_{p\times r} \tilde W_* W_*  I_{r\times p}||_F \]
and when $\lambda < S_{rr}$ it is
\[ ||X-A_*B_*^\top||_F = ||S - (S-\lambda I)^+ I_{p\times r} \tilde W_* W_*  I_{r\times p}||_F \]
 and only $\tilde W_* W_* = I$ will minimize the cost. 
\end{theorem}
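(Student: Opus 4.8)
The plan is to treat the statement as a corollary of the machinery of Sections~\ref{vecConvergence}--\ref{signs}: first read off the limiting factors and hence $A_*,B_*$, then compute $\|X-A_*B_*^\top\|_F$ by a diagonal-matrix manipulation, and finally minimize the resulting diagonal least-squares expression coordinate by coordinate. I expect essentially all of the difficulty to sit in the first step; the rest is bookkeeping.

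For the first step, recall that Theorem~\ref{thm1} together with the perturbation analysis of Section~\ref{valConvergence} gives $U_k\to U_{(1:r)}$, $\tilde U_k\to V_{(1:r)}$ and $D_k,\tilde D_k\to I_{r\times p}DI_{p\times r}$ with $D=\sqrt{(S-\lambda I)^+}$, while Theorem~\ref{rightvecs} handles $V_k,\tilde V_k$; combined with the hypothesis $W_k\to W_*$, $\tilde W_k\to\tilde W_*$, every matrix produced by \eqref{Beq}--\eqref{svd2} converges, which is the asserted convergence of the iteration. To identify $A_*,B_*$ I would pass to the limit in the \emph{uncoupled} recursions \eqref{Beq} and \eqref{Aeq} (rather than \eqref{Beq2} and \eqref{Aeq2}, where $A_{k+1}$ and $B_{k+1}$ are defined in terms of each other), using $X^\top U_{(1:r)}=VSI_{p\times r}$, $XV_{(1:r)}=USI_{p\times r}$, and the fact that $S$, $D$, $(D^2+\lambda I)^{-1}$ and $\tilde W_*$ are diagonal and hence commute. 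This yields
\[ A_*=USD(D^2+\lambda I)^{-1}I_{p\times r}\tilde W_*,\qquad B_*=VSD(D^2+\lambda I)^{-1}I_{p\times r}W_*, \]
matching the limits announced in the overview. Marshalling the separate convergence statements carefully — in particular making sure the residual sign freedom in $U_k,\tilde U_k$ really has been pinned down so that these are genuine matrix limits, not merely subspace limits — is the crux of the argument.

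For the cost, substitute these expressions into $\|X-A_*B_*^\top\|_F$. Writing $G=SD(D^2+\lambda I)^{-1}$ (a $p\times p$ diagonal matrix) and $H=\tilde W_*W_*$ (a diagonal $\pm1$ matrix), one gets $A_*B_*^\top=U\,(G\,I_{p\times r}HI_{r\times p}\,G)\,V^\top$, and since $G$ is diagonal the sandwich collapses to $G^2 I_{p\times r}HI_{r\times p}=S^2D^2(D^2+\lambda I)^{-2}I_{p\times r}\tilde W_*W_*I_{r\times p}$ (the leading $r\times r$ block gets squared, everything else stays zero). Hence $X-A_*B_*^\top=U\bigl(S-S^2D^2(D^2+\lambda I)^{-2}I_{p\times r}\tilde W_*W_*I_{r\times p}\bigr)V^\top$, and orthogonal invariance of the Frobenius norm gives the first claimed identity once $D^2$ is replaced by $(S-\lambda I)^+$. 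When $\lambda<S_{rr}$ every one of the first $r$ singular values exceeds $\lambda$, so $s_i^2(s_i-\lambda)/s_i^2=s_i-\lambda$ and the coefficient matrix collapses to $(S-\lambda I)^+$, which is the second identity.

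For the minimization, the matrix inside the norm is diagonal, so the squared cost is $\sum_{i=1}^r(s_i-m_i\varepsilon_i)^2+\sum_{i>r}s_i^2$, with $m_i\ge0$ the $i$-th coefficient and $\varepsilon_i=(\tilde W_*W_*)_{ii}\in\{-1,1\}$ the only free parameters (the regularization term of \eqref{rrss} is itself independent of the signs, as $U$, $V$, $W_*$, $\tilde W_*$ are orthogonal). Since $(s_i-m_i\varepsilon_i)^2=s_i^2+m_i^2-2s_im_i\varepsilon_i$ with $s_i>0$ and $m_i\ge0$, the choice $\varepsilon_i=+1$ minimizes each summand, strictly whenever $m_i>0$; and for $\lambda<S_{rr}$ one has $m_i=s_i-\lambda>0$ for all $i\le r$, so $\tilde W_*W_*=I$ is the unique minimizer (when $\lambda\ge S_{rr}$ the vanishing $m_i$ leave those signs irrelevant, so $\tilde W_*W_*=I$ still minimizes but not uniquely). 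As noted, the one genuine obstacle is the first step; the cost computation and the minimization are routine.
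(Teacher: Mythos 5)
Your proposal is correct and follows essentially the same route as the paper: pass to the limit in the uncoupled updates \eqref{Beq} and \eqref{Aeq} to identify $A_*=USD(D^2+\lambda I)^{-1}I_{p\times r}\tilde W_*$ and $B_*=VSD(D^2+\lambda I)^{-1}I_{p\times r}W_*$, use orthogonal invariance of the Frobenius norm to reduce the cost to a diagonal expression, and observe that any $-1$ entry of $\tilde W_*W_*$ turns a subtraction into an addition. Your explicit coordinate-wise minimization and your remark that uniqueness of the minimizer requires $\lambda<S_{rr}$ (and that the sign ambiguity of $U_k,\tilde U_k$ from Theorem \ref{thm1} deserves to be pinned down before writing genuine matrix limits) are slightly more careful than the paper's write-up, but they do not change the argument.
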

\begin{proof}
If we make a convergent choice for the sign matrices $W_k \to W_*$ and $\tilde W_k \to \tilde W_*$ equation \eqref{Beq} defines a steady state,
\[ B_* = X^\top U_* W_* D_*(D_*^2+\lambda I)^{-1} = VS  D(D^2+\lambda I)^{-1}I_{p\times r} W_* \]
where $D_* = I_{r\times p}D I_{p\times r}$ as shown in Section \ref{valConvergence}.  Similarly \eqref{Aeq} defines a steady state,
\[ A_* = X \tilde U_* \tilde W_* D_*(D_*^2+\lambda I)^{-1} = US  D(D^2+\lambda I)^{-1}I_{p\times r} \tilde W_*. \]
Thus we find the low rank approximation of $X$ to be given by,
\[ A_{*}B_{*}^\top = US^2 D^2 (D^2+\lambda I)^{-2} I_{p\times r} \tilde W_* W_* I_{r\times p} V^\top \]
and when $\lambda < S_{rr}$ this reduces to 
\[ A_{*}B_{*}^\top = U (S-\lambda I)^+ I_{p\times r} \tilde W_* W_*  I_{r\times p} V^\top. \]
Notice that when $\tilde W_* W_* = I$ this is the optimal solution of \eqref{rrss} and \eqref{ssvd}.  In the general case, we find the first part of the cost functional is given by, 
\begin{align} ||X-A_*B_*^\top||_F &= ||USV^\top -U (S-\lambda I)^+ I_{p\times r} \tilde W_* W_*  I_{r\times p} V^\top||_F \nonumber \\
&= ||S - S^2 D^2 (D^2+\lambda I)^{-2} I_{p\times r} \tilde W_* W_*  I_{r\times p}||_F \nonumber
 \end{align}
 and when $\lambda < S_{rr}$ we have,
 \[ ||X-A_*B_*^\top||_F = ||S - (S-\lambda)^+ I_{p\times r} \tilde W_* W_*  I_{r\times p}||_F. \]
 Since $\tilde W_*$ and $W_*$ are diagonal sign matrices, so is $\tilde W_* W_*$ and any negative entries would change the subtraction to addition in the above cost functional, so the solution $A_* B_{*}^\top$ is optimal only when $\tilde W_* W_*  = I$.
\end{proof}
Finally, since $\tilde W_*$ and $W_*$ are both sign matrices, the way to insure $\tilde W_* W_* = I$ is to choose $W_* = \tilde W_*$.  In other words, we need to ensure that the choice of sign matrices in \eqref{svd1} and \eqref{svd2} are the same.  Algorithm \ref{alg3} does this by choosing the diagonal entries of $\tilde W_k$ to be the signs of the sums of the columns of $\tilde V_k$ and similarly for $W_k$ in terms of $V_k$.  Since Theorem \ref{rightvecs} show that $\tilde V_k, V_k$ are converging to diagonal matrices (independent of the choice of $\tilde W_k, W_k$) these choices of $\tilde W_k,W_k$ will insure that both $\tilde W_k \tilde V_k^\top$ and $W_k V_k$ converge to the identity matrix.  In fact, it does not matter which unique sign choice is made in the SVDs in \eqref{svd1} and \eqref{svd2} as long as the \emph{same} choice is made for both SVDs.  Effectively, the choice of sign matrices is how the right singular vectors of \eqref{svd1} and \eqref{svd2} contribute to the iteration in Algorithm \ref{alg3}, whereas they are not used at all in Algorithm \ref{alg2}.

\section{Conclusions and Future Work}

In this paper we introduced Algorithm \ref{alg3} as a new rank-restricted soft SVD method and we have proven convergence to the optimal solution of \eqref{rrss}.  We have shown that the standard method, Algorithm \ref{alg2}, can fail to converge or can converge to a non-optimal stationary point.  Moreover, we have derived the convergence rate of Algorithm \ref{alg3} based on the singular values of the matrix $X$ which shows how Algorithm \ref{alg3} can obtain much faster convergence than the naive alternating directions approach of Algorithm \ref{alg1}.  Since Algorithm \ref{alg3} is only one component of the matrix completion method introduced in \cite{Hastie}, an important future direction is analyzing the entire matrix completion algorithm.  Moreover, the choice of the rank restriction, $r$, and regularization parameter $\lambda$ are critical for obtaining the best matrix completion.  Investigating methods of selecting these parameters, possibly based on cross-validation, is another critical direction for future research.  Finally, while Algorithm \ref{alg3} is of significant interest due to its use in matrix completion problems \cite{Hastie,mazumder2010spectral,cai2010singular,candes2010matrix}, it could also be used as a partial SVD algorithm and comparison to state-of-the-art SVD methods \cite{SVD1,SVD2,SVD3} could yield future insights or improvements.

\bibliographystyle{siamplain}
\bibliography{biblio}
\end{document}